\documentclass[final]{siamltex}

\usepackage{hyperref} 

\pdfoutput=1

\usepackage[latin1]{inputenc}
\usepackage{mystyle}
\usepackage{url}


\graphicspath{{./images/}}

\hypersetup{  
   bookmarks=true,
   backref=true,
   pagebackref=false,
   colorlinks=true,
   linkcolor=blue,
   citecolor=red,
   urlcolor=blue,
   pdftitle={Entropic Wasserstein Gradient Flows},
   pdfauthor={Gabriel Peyr\'e},
   pdfsubject={Entropic Wasserstein Gradient Flows}
}

\title{Entropic Wasserstein Gradient Flows}

\author{Gabriel Peyr\'e\thanks{CNRS and CEREMADE, Universit\'e Paris-Dauphine, Place du Mar\'echal De Lattre De Tassigny, 75775 PARIS CEDEX 16, FRANCE.} }

\begin{document}

\maketitle

\begin{abstract}
	This article details a novel numerical scheme to approximate gradient flows for optimal transport (i.e. Wasserstein) metrics. 
	These flows have proved useful to tackle theoretically and numerically non-linear diffusion equations that model for instance porous media or crowd evolutions. These gradient flows define a suitable notion of weak solutions for these evolutions and they can be approximated in a stable way using discrete flows. These discrete flows are implicit Euler time stepping according to the Wasserstein metric.
	A bottleneck of these approaches is the high computational load induced by the resolution of each step. Indeed, this corresponds to the resolution of a convex optimization problem involving a Wasserstein distance to the previous iterate.
	Following several recent works on the approximation of Wasserstein distances, we consider a discrete flow induced by an entropic regularization of the transportation coupling. This entropic regularization allows one to trade the initial Wasserstein fidelity term for a Kullback-Leibler divergence, which is easier to deal with numerically. We show how Kullback-Leibler first order proximal schemes, and in particular Dykstra's algorithm, can be used to compute each step of the regularized flow. 
	The resulting algorithm is both fast, parallelizable and versatile, because it only requires multiplications by the Gibbs kernel $e^{-c/\ga}$ where $c$ is the ground cost and $\ga>0$ the regularization strength. On Euclidean domains discretized on an uniform grid, this corresponds to a linear filtering (for instance a Gaussian filtering when $c$ is the squared Euclidean distance) which can be computed in nearly linear time. On more general domains, such as (possibly non-convex) shapes or on manifolds discretized by a triangular mesh, following a recently proposed numerical scheme for optimal transport, this Gibbs kernel multiplication is approximated by a short-time heat diffusion. 
	We show numerical illustrations of this method to approximate crowd motions on complicated domains as well as non-linear diffusions with spatially-varying coefficients. 
\end{abstract}

\begin{keywords}Optimal transport, gradient flow, JKO flow, Wasserstein distance, Kullback-Leibler divergence, Dykstra's algorithm, crowd motion, non-linear diffusion.\end{keywords}

\begin{AMS}90C25, 68U10\end{AMS}
\section{Introduction}

\subsection{Optimal Transport}

\paragraph{Optimal transport: from theory to applications}

In the last 20 years or so, optimal transport (OT) has emerged as a foundational tool to analyze diverse problems at the interface between variational analysis, partial differential equations and probability. We refer to the book of Villani~\cite{Villani03} for an introduction to these topics. It took more time for this notion to become progressively mainstream in various applications, which is largely due to the high computation cost of the corresponding (static) linear program of Kantorovich~\cite{Kantorovich42} or to the dynamical formulation of Benamou and Brenier~\cite{Benamou2000}. However, one can now found many relevant uses of OT in very diverse fields such as astrophysics~\cite{FrischNaturee}, computer vision~\cite{RubTomGui00}, computer graphics~\cite{Bonneel-displacement}, image processing~\cite{2014-xia-siims}, statistics~\cite{BigotBarycenter} and machine learning~\cite{CuturiSinkhorn}, to name a few.

\paragraph{Entropic regularization}

In order to obtain fast approximations of optimal transport distances (a.k.a. Wasserstein distances), there has been a recent revival of the so-called entropic regularization method. Cuturi~\cite{CuturiSinkhorn} presented this scheme in the machine learning community as a fully parallelizable algorithm which can make the method scalable to large problems. He shows that this corresponds to the application of the well-known iterative diagonal scaling algorithm, which is sometime referred to as Sinkhorn's algorithm~\cite{Sinkhorn64,SinkhornKnopp67,Sinkhorn67} or IPFP~\cite{DemingStephanIPFP}. This method is also closely related to Schrodinger's problem~\cite{Shrodinger31} of projecting a Gibbs distribution on fixed marginal constraints, see~\cite{RuschendorfThomsen,LeonardShrodinger} for recent mathematical accounts on this problem.

The major interest of this entropic approximation is that it allows one to re-cast various OT-related problems as optimizations over the space of probabilities endowed with the Kulback-Leibler divergence. The geometry of this space, as well as the availability of efficient first-order optimization methods, makes this novel formulation numerically more friendly than the original linear program formulation. The price to pay for such simple and efficient approaches is the presence of an extra amount of smoothing (in fact a blurring by the Gibbs kernel) on the obtained results. 

\paragraph{Variational problems involving OT}

These methods have been used to solve various variational optimization problem involving the Wasserstein distance. For instance the computation of Wasserstein barycenters, initially proposed in~\cite{Carlier_wasserstein_barycenter}, has been approximated by entropic regularization in~\cite{CuturiBarycenter}. A more general class of problems, including multi-marginal transport (see~\cite{PassMultimarginal} for recent results on this topic) as well as generalized Euler's flows (see~\cite{BrenierEulerAMS} for a weak formulation of Euler's equations), partial transport (as defined in~\cite{FigalliPartial}) and capacity-constrained transport (as defined in~\cite{JonathanMcCannCapacity}) have been approximated by entropic smoothing in~\cite{BregmanProj2015}. 

Our work goes in the same direction of applying entropic regularization to speed up the computation of OT-related problems. Instead of considering here the minimization of functionals involving the Wasserstein distance, we consider here the minimization of convex functions according to the Wasserstein distance.

\subsection{Previous Works}
\label{sec-previous-works}

\paragraph{Wasserstein flows -- theory}

It is natural to derive various partial differential equations (PDE's) as gradient flows of certain energy functionals. While it is most of time assumed that the flow follows the gradient as defined through the $L^2$ topology on some Hilbert space of functions, it is sometime desirable to consider more complicated metrics. This allows one to capture different PDE's and also sometime to give a precise meaning to weak solutions of these PDE's. One of the most striking example is the computation of gradient flows over spaces of probability distributions (i.e. positive and normalized measures) according to the topology defined by the Wasserstein metric. In this setting, the gradient descent cannot be understood directly as an infinitesimal explicit descent in the direction of some gradient, but rather as a limit of an implicit Euler step, as detailed in Section~\ref{subsect-jko-stepping}. This idea corresponds to the notion of gradient flows in metric spaces exposed in the book~\cite{ambrosio2006gradient}.   

The pioneer paper of Jordan, Kinderlehrer and Otto~\cite{jordan1998variational} shows how one recovers Fokker-Planck  diffusions of distributions when one minimizes entropy functionals according to the $W_2$ Wasserstein metric.
The corresponding method are often referred to as`JKO flows'' in reference to these authors. Since then, many non-linear PDE's have been derived as gradient flows for Wasserstein metrics, including the porous medium equation~\cite{otto2001geometry}, the heat equation on manifolds~\cite{ErbarHeatManifold}, degenerate parabolic PDE's~\cite{agueh2002existence}, Keller-Segel equation~\cite{blanchet2008convergence} and higher order PDE's~\cite{Burger-JKO}. It is also possible to define a suitable notion of minimizing flow that cannot be written as PDE's due to the non-differentiability of the energy functional, a striking example being the model of crowd motion with congestion proposed by~\cite{maury2010macroscopic},

\paragraph{Wasserstein flows -- numerics}

The use of Wasserstein methods to discretize non-linear evolutions is an emerging field of research. The major difficulty lies in the high computational cost induced by the resolution of each step. 

The case of 1-D densities is simpler because the optimal transport metric is a flat metric when re-parameterized using inverse cumulative functions. This idea is used in~\cite{kinderlehrer1999approximation,blanchet2008convergence,blanchet2012optimal,agueh2013one,Matthes1D}. 
In higher dimension, a first class of approaches uses an Eulerian representation of the discretized density (i.e. on a fixed grid). The resulting problem can be solved using interior point methods for convex energies~\cite{Burger-JKO} or some sort of linearization in conjunction with finite elements~\cite{burger2010mixed} or finite volumes~\cite{CarrilloFiniteVolume} schemes. 
A second class of approaches rather uses a Lagrangian representation, which is well adapted to optimal transport where the thought after solution is obtained by warping the density at the previous iterate. This idea is at the heart of several schemes, using discretized warpings~\cite{carrillo2009numerical}, particules systems~\cite{Westdickenberg2010}, moving meshes~\cite{BuddMoving} and a consistent discretization of the gradients of convex functions (i.e. optimal transports)~\cite{JDB-JKO}. 

In this article, we use an Eulerian discretization and intend at approximating flows for energies that are already convex in the usual (Euclidean) sense. The main goal is to provide a fast and quite versatile discretization scheme through the use of an entropic smoothing method.


\paragraph{First order scheme with respect to Bregman divergences}

First order proximal optimization schemes have been recently popularized in image processing and machine learning, due to their simplicity and the low computational cost of each iteration. Each step typically requires the computation of proximal operators, which are defined as strictly convex optimization sub-problems, corresponding to an implicit step according to the $L^2$ distance. We refer to the book~\cite{BauschkeCombettes11} for an overview of this large class of methods and recent developments. Note that these $L^2$ proximal methods have been used to solve the dynamical formulation of OT~\cite{Benamou2000,FPapPeyOud13}. 

Many of these proximal algorithms have been extended when one replaces the $L^2$ metric by more general Bregman divergences. The prototypical algorithm (although rarely applicable in its original form) that has been extended to this divergence setting is the so-called proximal point algorithm~\cite{EcksteinProxPoint} (see~\cite{KiwielProxPoint} for an extension to more general, possibly non-smooth, divergences) which corresponds to iteratively applying the proximal operator of the function to be minimized.

Iterative projections on convex sets is probably the simplest yet useful example of proximal methods. It has been extended to the general setting of Bregman's divergences by Bregman~\cite{bregman1967relaxation}. This scheme actually computes the projection on the intersection of convex sets if these sets are affine, which is a restrictive assumption. The natural extension of iterative projections to generic closed convex sets is the so-called Dykstra's algorithm~\cite{dyk,csis}, which can be interpreted as a block-coordinate optimization on the dual problem. Dykstra's method has been extended to the special case of half-spaces in~\cite{CensorReich-Dykstra} and to generic closed convex sets in~\cite{bauschke-lewis,BregmanCensorReich-Dykstra}. 
Actually, as we show in Section~\ref{subsec-dykstra-bregman}, this result extends to arbitrary proper lower-semicontinuous convex functions (that are not necessarily indicators of closed convex sets). Note that such an extension is well-known for the case of the $L^2$ metric~\cite{BauschkeCombettes-Dykstra}.

While in this paper we only make use of Dykstra's algorithm, it should be noted that many more proximal splitting algorithms are available in this Bregman's divergences setting, such as Douglas-Rachford and ADMM~\cite{WangBanerjee-ADMM}, primal-dual algorithms~\cite{ChambollePock-div} and hybrid proximal point algorithms~\cite{SolodovSvaiterBregman}

\subsection{Contributions}

In this paper, we present a novel numerical scheme to compute approximations of discrete gradient flows for  Wasserstein metrics. The approximation is performed by an entropic smoothing of the original OT distance. Each step is computed as the resolution of a convex but possibly non-smooth optimization problem involving a Kulback-Leibler divergence to some Gibbs kernel. We thus propose in Section~\ref{sec-bregman-prox} to solve it using an extension of Dykstra's algorithm to this class of problems, for which we prove the convergence to the solution. Our main finding is that this scheme is both simple to implement and competitive in term of computational speed, since it only requires multiplications with the Gibbs kernel, which, for many practical scenarios, can be achieved in nearly linear time.  We illustrate in Secton~\ref{sec-numerics} this point by applications to a crowd motion model involving a non-smooth congestion term and to non-linear diffusions with spatially varying coefficients. Lastly, Section~\ref{sec-general-functinons} presents a generalization of the proposed algorithm to the case were several couplings are optimized. We show the usefulness of this generalization to compute the gradient flow of a Wasserstein attraction term with congestion and to compute evolution of several coupled densities. 

The code to reproduce the numerical part of this article is available online\footnote{\url{https://github.com/gpeyre/2015-SIIMS-wasserstein-jko/}.}

\subsection{Notations}

In the following we consider either vectors $p \in \RR^N$ ($N$ being the number of discretization points) that are usually in the probability simplex
\eql{\label{eq-def-simplex}
	\Si_N \eqdef \enscond{p \in \RR_+^N}{\textstyle\sum_{i=1}^N p_i = 1}
}
and couplings, that are matrices $\pi \in \RR_+^{N \times N}$. We denote $\dotp{p}{q}=\sum_{i=1}^N p_i q_i$ the canonical inner product on $\RR^N$ and similarly on $\RR^{N \times N}$.

For some set $\Cc \subset \RR^Q$ (typically $Q=N$ or $Q=N \times N$), we define its indicator function as
\eq{
	\foralls a \in \RR^Q, \quad
	\iota_\Cc(a) \eqdef \choice{
		0 \qifq a \in \Cc, \\
		+\infty \quad \text{otherwise}.
	}
}
To ease notations, we define $\odot$ and $\frac{\cdot}{\cdot}$ as being entry-wise operations, i.e. $a \odot b \eqdef (a_i b_i)_i$ and $\frac{a}{b} \eqdef (a_i/b_i)_i$. We denote as $\ones \eqdef (1,\ldots,1)^T \in \RR^N$ the vector filled with ones. We define
\eql{\label{eq-modulo-2}
	\foralls \ell \in \NN, \quad
	[\ell]_2 \eqdef \choice{
		1 \qifq \ell \text{ is odd, }\\
		2 \qifq \ell \text{ is even. }	
	}
}

We define minus the entropy on both vectors and couplings (and we make this distinction on purpose to ease the description of the proposed methods) as
\begin{align}
	\label{eq-entropy-defn}
	\foralls p \in \RR^N, \quad
	\oE(p) &\eqdef \sum_{i=1}^N p_i (\log(p_i)-1) + \iota_{\RR^+}(p_i),  \\
	\label{eq-entropy-couplings-defn}
	\foralls \pi \in \RR^{N \times N}, \quad
	E(\pi) &\eqdef \sum_{i,j=1}^Q \pi_{i,j} (\log(\pi_{i,j})-1) + \iota_{\RR^+}(\pi_{i,j}),  
\end{align}
with the convention that $0 \log(0)=0$. 

We define the Kulback-Leibler divergence on both vectors and couplings as
\begin{align}
	\label{eq-defn-kl}
	\foralls (p,q) \in \RR_{+}^N \times \RR_{+,*}^N, \quad
	\oKL(p|q) &\eqdef \sum_{i=1}^Q p_i \log\pa{ \frac{p_i}{q_i} } - p_i + q_i, \\
	\foralls (\pi,\xi) \in \RR_+^{N \times N} \times \RR_{+,*}^{N \times N}, \quad
	\KL(\pi|\xi) &\eqdef \sum_{i,j=1}^Q \pi_{i,j} \log\pa{ \frac{\pi_{i,j}}{\xi_{i,j}} } - \pi_{i,j} + \xi_{i,j}.
\end{align}

\section{Entropic Discrete JKO Flows}
\label{sec-entropic-jko}

In this article, we consider discrete flows (i.e. evolutions are discretized in time) of discrete probability distributions (i.e. the space is also discretized, and we deal with finite dimensional problems). More precisely, we consider a computational grid $\{x_i\}_{i=1}^N$ of $N$ points, which can be understood for instance as an uniform grid in a sub-set of $\RR^d$ (in the numerical illustrations of Section~\ref{sec-numerics} we consider $d=2$) or as vertices of a triangulation of a surface. We thus consider discrete probability measures on this set of points, which can be understood as sums of Dirac masses located at the $x_i$'s locations, and which we represent in the following as vectors $p \in \Si_N$ in the simplex as defined in~\eqref{eq-def-simplex}. 

\subsection{Entropic Regularization of Wasserstein Distance}

Discretized optimal transport of such discrete measure is defined according to some ground cost $c \in \RR^{N \times N}$. A typical scenario is when $x_i \in \RR^d$ are points in the Euclidean space and one considers $c_{i,j} = \norm{x_i-x_j}^\al$, corresponding to the definition of Wasserstein distances. The case $\al=1$ corresponds to Monge's original problem, and $\al=2$ to the so-called $W_2$ metric, which is by far the most studied case because of its geometrical properties~\cite{Villani03}. A natural extension is to consider points $x_i$ on a smooth manifold $\Mm$ and to define $c_{i,j}=d_\Mm(x_i,x_j)^\al$ where $d_\Mm$ is the geodesic distance on the manifold. 

Following several recent works (see Sections~\ref{sec-previous-works}), the entropic-regularized transportation distance between $(p,q) \in \Si_N^2$ for a ground cost $c \in \RR^{N \times N}$  is
\eq{
	W_{\ga}(p,q) \eqdef \umin{\pi \in \Pi(p,q)} \dotp{c}{\pi} + \ga E(\pi)
}
for some regularization parameter $\ga \geq 0$, where the set of couplings with prescribed marginals $(p,q)$ is
\eq{
	\Pi(p,q) \eqdef \enscond{\pi \in \RR_+^{N \times N}}{ \pi\ones=p, \pi^T\ones=q }.
}
The case $\ga=0$ corresponds to the classical, un-regularized, optimal transport, and is a linear program. The case $\ga>0$ corresponds to a strictly convex minimization problem, where $E$ plays the role of a barrier function of the positive octant making the optimization problem better conditioned numerically. But there is more than merely a strict-convexification of the original functional, otherwise one could have settle for more classical log-barrier routinely used in interior-point methods~\cite{nesterov1994interior}. 
Algebraic properties of the entropy, and its close relationship with the Kulback-Leibler (KL) divergence~\eqref{eq-defn-kl} (see Section~\ref{subsec-dykstra-bregman} for a precise statement) indeed enables closed-form solutions for the various marginal projections problems encountered in OT problems (see for instance Proposition~\ref{prop-projection}). 

It is important to remind that $W_{\ga}$ is not a distance for $\ga>0$, and we refer to Section~\ref{sec-conclusion} for a discussion on the impact of this deficiency.

\subsection{JKO Stepping}
\label{subsect-jko-stepping}

Following the initial work of~\cite{jordan1998variational} (which gives the name of the method, ``JKO flows''), it is possible to discretize various non-linear PDE's as a gradient flow of a functional $f$ using implicit gradient step with respect to a Wasserstein distance. Our method relies on the idea of replacing the initial Wasserstein metric by its entropic regularized approximation. 

A entropically regularized JKO iteration is an implicit descent descent step with respect to the $W_\ga$ ``metric''. To be consistent with notations introduced in the remaining parts of this article, we thus refer to it as a proximal operator according to $W_\ga$, and its definition reads, for $\tau>0$, 
\eql{\label{eq-def-jko-operator}
	\foralls q \in \RR^N, \quad
	\Prox_{\tau f}^{W_\ga}(q) \eqdef \uargmin{p \in \Si_N} W_\ga(p,q) + \tau f(p).
}
Note that since $p \mapsto W_{\ga}(p,q)$ is a strictly convex and coercive function, this operator is uniquely defined.  

Starting from some fixed discrete density $p_{t=0} \in \Si_N$, one defines the discrete JKO follow as
\eql{\label{eq-smooth-jko}
	\foralls t>0, \quad p_{t+1} \eqdef \Prox_{\tau_t f}^{W_{\ga_t}}(p_t), 
}
where $\tau_t>0$ is the step size, $\ga_t>0$ the entropic regularization parameter. Note that we allow here these parameters to vary during the iterations, although we use fixed parameters in the numerical sections~\ref{sec-numerics} and~\ref{sec-general-functinons}.

When $c_{i,j} = d_\Mm(x_i,x_j)^2$ (the geodesic distance squared on some smooth manifold $\Mm$), $f$ is smooth and $\ga=0$ (no entropic regularization), a formal computation shows that this scheme discretizes, as $(\tau,1/N) \rightarrow 0$, the PDE
\eq{
	\pd{p}{t} = \diverg_\Mm\pa{ p \nabla_\Mm ( f'(p) ) },
}
where $\diverg_\Mm$ and $\nabla_\Mm$ are the gradient and divergence operators on the manifold $\Mm$, and $f'$ is the differential of $f$ (the gradient for the $L^2$ metric on $\Mm$). We refer to~\cite{ErbarHeatManifold} for a proof of this relationship when $f$ is an entropy on a manifold. 

For instance, in the case where $f(p)=\oE(p) + \dotp{p}{w}$, this discrete flow thus discretizes a linear diffusion-advection on the manifold
\eq{
	\pd{p}{t} = \Delta_\Mm(p) + \diverg_\Mm( p z )
	\qwhereq
	z = \nabla_\Mm(w).
}
so that the mass get advected by the vector field $z$.

\subsection{KL Proximal Operators}

In order for the method that we propose to be applicable, the function $f$ must be convex and should be ``simple'' in the sense that one should be able to compute easily its proximal operator for the KL divergence. Similarly to~\eqref{eq-def-jko-operator}, this proximal operator is defined as
\eql{\label{eq-def-kl-prox-operator}
	\foralls q \in \RR^N, \quad
	\Prox_{\tau f}^{\oKL}(q) \eqdef \uargmin{p \in \RR_+^N} \oKL(p|q) + \tau f(p).
}
Note that since $p \mapsto \oKL(p|q)$ is a strictly convex and coercive function, this operator is uniquely defined.
Section~\eqref{sec-numerics} shows two examples of such ``simple'' functions: an indicator of a box constraint (for crowd movement) and generalized entropies (for non-linear diffusions).

The underlying rationale behind the framework we propose in this article is that, while it is in general impossible to directly compute the operator $\Prox_{\tau f}^{W_\ga}$, there are many functionals for which $\Prox_{\tau f}^{\oKL}$ is accessible either in closed form, or through a fast and precise algorithm. We will thus trade the application of a single implicit $W_\ga$ proximal step by the iterative application of several KL implicit proximal steps. 
Note that, in particular, $f$ does not need to be smooth, which is crucial to model non-PDE evolutions such as crowd movements with congestion~\cite{maury2010macroscopic}.   

The main property of the KL proximal operator are recalled in Appendix~\ref{sec-kl-calculus}.

\if 0
The gradient flow evolution reads, as a PDE,
\eq{
	\pd{p}{t} = -\diverg(p v(p))
}
where the gradient direction according to the $W_2$ metric is
\eq{
	v(p) \eqdef \lim_{\epsilon \rightarrow 0} \min_v f((\Id+\epsilon v) \sharp p) + \frac{\epsilon}{2}
		W_2(p,(\Id+\epsilon v)\sharp p)^2
}
and one has the Taylor expansion
\eq{
	f((\Id+\epsilon v) \sharp p) = f(p) + \epsilon \dotp{v}{\nabla Df(p)}_{\Ldeux(p)} + O(\epsilon^2)
}
where $Df$ is the differential (gradient) of the energy $f$, and
\eq{
	W_2(p,(\Id+\epsilon v)\sharp p)^2 = \epsilon \norm{v}_{\Ldeux(p)}^2 + O(\epsilon^2)
}
so that informally, one has 
\eq{
  v(p) = \lim_{\epsilon \rightarrow 0} \min_v f(p) + \epsilon \dotp{v}{\nabla Df(p)}_{\Ldeux(p)} + \epsilon \norm{v}_{\Ldeux(p)}^2 + O(\epsilon^2)
}
which leads to
\eq{
	  0  = p \nabla Df(p) + p v 
	  \qarrq
	    v(p) = -\nabla Df(p)
}
and hence the PDE
\eq{
  	\pd{p}{t} = \diverg(p \nabla Df(p)).
}
\fi 
\section{A Bregman Proximal Splitting Approach}
\label{sec-bregman-prox}

In this section, we show how to re-formulate a single entropic regularized JKO step in order to introduce a KL divergence penalty. This is useful to allows for the application of generalized first order proximal methods. 

\subsection{Reformulation as a KL Minimization}

We consider a single time step $t$, and denoting $q \eqdef p_t$ the previous iterate of the flow, one can re-write the JKO stepping operator~\eqref{eq-def-jko-operator} as
\eq{
	\Prox_{\tau f}^{W_{\ga}}(q) = \pi \ones 
} 
where $\pi \in \RR^{N \times N}$ solves the following strictly convex optimization problem
\eql{\label{eq-prox-step}
	\umin{\pi} 
		\dotp{c}{\pi} + \ga E(\pi) + \tau f(\pi \ones) + \iota_{\Cc_q}(\pi)
}
where we introduced the constraint set
\eq{
	\Cc_q \eqdef \enscond{\pi \in \RR^{N \times N}}{\pi^T \ones=q}.
}

The initial formulation~\eqref{eq-prox-step} can be re-cast as
\eql{\label{eq-defn-KL-prbm}
	\umin{\pi} \KL(\pi|\xi) + \phi_1(\pi) + \phi_2(\pi)
	\qwhereq
	\choice{
		\phi_1(\pi) \eqdef \iota_{\Cc_q}(\pi), \\
		\phi_2(\pi) \eqdef \frac{\tau}{\ga} f(\pi \ones),
	}
}
where we defined the Gibbs kernel $\xi$ as
\eq{ 
	\xi \eqdef e^{-c/\ga} \in \RR_{+,*}^{N \times N}.
}

\subsection{Dykstra Algorithm with Bregman Divergences}
\label{subsec-dykstra-bregman}

\paragraph{Bregman divergence and proximal map}

In order to give a more general treatment of optimization problems of the form~\eqref{eq-defn-KL-prbm}, that can be useful beyond the particular context of this article, we consider a generic Bregman divergence $\BregDiv_\Ga$, defined on some convex set $\Dd$.

We follow~\cite{bauschke-lewis} and define a Bregman divergence (see for instance) as
\eq{
	\foralls (\pi,\xi) \in \Dd \times \text{int}(\Dd), \quad
	\BregDiv_\Ga(\pi|\xi) = \Ga(\pi)-\Ga(\xi)-\dotp{\nabla \Ga(\xi)}{\pi-\xi}.
}
where $\Ga$ is a strictly convex function, smooth on $\interop(\Dd)$ where $\Dd=\dom(\Ga)$ such that its Legendre transform 
\eq{
	\Ga^*(\rho) = \umax{\pi \in \Dd} \dotp{\pi}{\rho} - \Ga(\pi), 
} 
is also smooth and strictly convex. In particular, one has that $\nabla \Ga$ and $\nabla \Ga^*$ are bijective maps between $\interop(\Dd)$ and $\interop(\dom(\Ga^*))$ such that $\nabla \Ga^* = \nabla \Ga^{-1}$. 

For $\Ga = \norm{\cdot}^2$, one recovers the Euclidean norm $\BregDiv_\Ga = \norm{\cdot}^2$. 
One has $\KL = \BregDiv_\Ga$ for $\Ga(\pi)=E(\pi)$, which is cased we used to tackle~\eqref{eq-defn-KL-prbm}. Note that in general, $\BregDiv_\Ga$ is not symmetric and does not satisfy the triangular inequality, so that it is not a distance. We refer to~\cite{bauschke-lewis} for a table detailing many examples of Bregman's divergences. 

Let us write the general form of problem~\eqref{eq-defn-KL-prbm} as
\eql{\label{eq-generic-optim}
	\umin{\pi \in \Dd} \BregDiv_\Ga(\pi|\xi) + \phi_1(\pi) + \phi_2(\pi)
}
where $\phi_1,\phi_2$ are two proper, lower-semicontinuous convex functions defined on $\Dd$. We also assume that the following qualification constraint holds
\eql{\label{eq-qualif}
	\ri( \dom(\phi_1) ) \cap \ri(\dom(\phi_2)) \cap \ri(\dom(\Gamma) )  \neq \emptyset.
}
where $\ri$ is the relative interior and $\dom(\phi)=\enscond{\pi}{\phi(\pi) \neq +\infty}$.

We define the proximal map of a convex function $\phi$ according to this divergence as
\eql{\label{eq-defn-proxKL}
	\Prox^{\BregDiv_\Ga}_{\phi}(\pi) \eqdef \uargmin{\tilde \pi \in \Dd} \BregDiv_\Ga(\tilde \pi|\pi) + \phi(\tilde \pi).
}
We assume that $\phi$ is coercive, so that $\Prox^{\BregDiv_\Ga}_{\phi}(\pi)$ is always uniquely defined by strict convexity. Furthermore, one has $\Prox^{\BregDiv_\Ga}_{\phi}(\pi) \in \interop(\Dd)$, see~\cite{bauschke-lewis}.

\paragraph{Dykstra's iterations}

Dykstra's algorithm starts by initializing  
\eq{
	\itz{\pi} \eqdef \xi
	\qandq
	\itz{v}=v^{(-1)} \eqdef 0.
}
One then iteratively defines, for $\ell>0$
\begin{align}\label{eq-it-bregman-1}
	\iter{\pi} &\eqdef  \Prox^{\BregDiv_\Ga}_{\phi_{[\ell]_2}}(  \nabla \Ga^*( \nabla \Ga(\itA{\pi}) + \itAA{v} ) ), \\
	\label{eq-it-bregman-2}
	\iter{v} &\eqdef \itAA{v} + \nabla\Ga( \itA{\pi} ) - \nabla \Ga(\iter{\pi}),
\end{align}
where $[\ell]_2$ is defined in~\eqref{eq-modulo-2}. Note that the iterates satisfies $\iter{\pi} \in \interop(\Dd)$, so that the algorithm is well defined. 

The iterates $\iter{\pi}$ of this algorithm are known to converge to the solution of~\eqref{eq-generic-optim} in the case where $\phi_1$ and $\phi_2$ are indicators of convex sets, see~\cite{bauschke-lewis}. This corresponds to the case where $\Prox^{\BregDiv_\Ga}_{\phi_{i}}$ for $i=1,2$ are projectors according to the Bregman divergence.

\paragraph{Convergence proof}

This convergence result in fact caries over to the more general setting where $(\phi_1,\phi_2)$ are arbitrary proper and lower-semicontinuous convex functions.  The proof follows from the fact that Dykstra's iterations correspond to an alternate block minimization algorithm on the dual problem. This idea was suggested to us by Antonin Chambolle and Jalal Fadili. 

\begin{prop}\label{prop-conv-dykstra}
	If condition~\eqref{eq-qualif} holds, then $\iter{\pi}$ converges to the solution of~\eqref{eq-generic-optim}.
\end{prop}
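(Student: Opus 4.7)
The plan is to recognize Dykstra's iterations~\eqref{eq-it-bregman-1}--\eqref{eq-it-bregman-2} as a two-block alternating minimization on a Fenchel dual of~\eqref{eq-generic-optim}, and then to conclude from convergence of this block coordinate descent together with continuity of $\nabla\Ga^*$. This is the strategy credited in the text to Chambolle and Fadili.

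First, I would dualize. Since $\BregDiv_\Ga(\pi|\xi) = \Ga(\pi) - \dotp{\nabla\Ga(\xi)}{\pi} + \text{constant}$, problem~\eqref{eq-generic-optim} is equivalent, up to a constant, to $\min_\pi \Ga(\pi) - \dotp{\nabla\Ga(\xi)}{\pi} + \phi_1(\pi) + \phi_2(\pi)$. Applying Fenchel--Rockafellar duality along the linear map $A\pi = (\pi,\pi)$ with separable cost $(\pi_1,\pi_2)\mapsto \phi_1(\pi_1)+\phi_2(\pi_2)$ --- whose qualification is exactly~\eqref{eq-qualif} --- yields strong duality with primal-dual attainment and the dual problem
\[
\min_{(u_1,u_2)} D(u_1,u_2) := \Ga^*\bigl(\nabla\Ga(\xi) - u_1 - u_2\bigr) + \phi_1^*(u_1) + \phi_2^*(u_2),
\]
together with the primal-dual relations $\pi^\star = \nabla\Ga^*(\nabla\Ga(\xi) - u_1^\star - u_2^\star)$ and $u_i^\star \in \partial\phi_i(\pi^\star)$.

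Next, I would identify the iterations with alternating block minimization of $D$. Denoting by $u_j^{(\ell)}$ the current value of the $j$-th dual block after step $\ell$ (unchanged for $j\neq[\ell]_2$), I would maintain by induction on $\ell$ the invariants $\nabla\Ga(\iter{\pi}) = \nabla\Ga(\xi) - u_1^{(\ell)} - u_2^{(\ell)}$ and $\iter{v} = u_{[\ell]_2}^{(\ell)}$. Writing the first-order condition for the block minimizer $u_{[\ell]_2} \leftarrow \arg\min_u D(u,u_{3-[\ell]_2}^{(\ell-1)})$ and using the Fenchel identity $\pi \in \partial\phi^*(u) \Leftrightarrow u \in \partial\phi(\pi)$, one reads off exactly the update~\eqref{eq-it-bregman-1} for $\iter\pi$; substituting back into~\eqref{eq-it-bregman-2} then yields $\iter{v} = u_{[\ell]_2}^{(\ell)} \in \partial\phi_{[\ell]_2}(\iter\pi)$, which preserves the invariants. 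Hence Dykstra's scheme is literally two-block alternating minimization of $D$.

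Third, I would establish convergence of the dual iterates. Strict convexity and smoothness of $\Ga^*$ on $\interop(\dom\Ga^*)$ make each block subproblem uniquely solvable and ensure monotone decrease of $D(u^{(\ell)})$. Convergence of the alternating minimization to a minimizer $u^\star$ of $D$ then follows from alternating-minimization theorems in the spirit of Csisz\'ar--Tusn\'ady, extending the indicator-set argument of~\cite{bauschke-lewis} to arbitrary proper lsc convex $\phi_i$. Finally, continuity of $\nabla\Ga^*$ on $\interop(\dom\Ga^*)$ together with the invariant of Step 2 gives $\iter\pi \to \nabla\Ga^*(\nabla\Ga(\xi) - u_1^\star - u_2^\star) = \pi^\star$, which is the unique minimizer of~\eqref{eq-generic-optim} by strict convexity of $\BregDiv_\Ga(\cdot|\xi)$.

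The delicate step is the third one: generic block-coordinate-descent theorems demand differentiability or strong convexity in each block, neither of which is available here since $\phi_i^*$ may be non-smooth with proper effective domain. The argument must instead exploit the specific Bregman-proximal structure --- iterates remain in $\interop(\Dd)$, the Fenchel--Young equality $\phi_i(\iter\pi)+\phi_i^*(\iter v) = \dotp{\iter v}{\iter\pi}$ is tight when block $i$ is updated, and $\nabla\Ga^*$ is a homeomorphism between the interiors of $\dom\Ga^*$ and $\dom\Ga$ --- in order to pass to the limit in the subgradient inclusions, in analogy with but more delicate than the indicator case of~\cite{bauschke-lewis}.
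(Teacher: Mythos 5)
Your proposal follows essentially the same route as the paper: dualize, recognize Dykstra's updates as a two-block alternating minimization of the dual objective via exactly the change of variables you describe, and transfer convergence back to the primal through continuity of $\nabla\Ga^*$. The ``delicate step'' you flag is dispatched in the paper by observing that the dual objective is a smooth coupling term $\Ga^*(\nabla\Ga(\xi)-u_1-u_2)$ plus terms $\phi_i^*(u_i)$ that are separable across the two blocks, which is precisely the structure under which classical alternating-minimization convergence results apply (the paper cites~\cite{Ciarlet-Book}).
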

\begin{proof}
The dual problem to~\eqref{eq-generic-optim} reads
\eql{\label{eq-generic-optim-dual}
	\umax{u_1,u_2} -\phi_1^*(u_1)-\phi_2^*(u_2)-\Ga^*( \al-u_1-u_2 ) - C(\xi)
}
where the constant is $C(\xi) \eqdef \dotp{\al}{\xi}-\Ga(\xi)$
and where we defined $\al \eqdef \nabla\Ga(\xi)$.

Duality means that under the domain qualification hypothesis~\eqref{eq-qualif}, the minimum value of~\eqref{eq-generic-optim} and the maximum value of~\eqref{eq-generic-optim-dual} are the same, and that the primal solution $\pi$ can be recovered from the dual one $(u_1,u_2)$ as
\eql{\label{eq-primal-dual-bregman}
	\pi = \nabla\Ga^*(-u_1-u_2). 
}

Starting from $(\itz{u_1},\itz{u_2})=(0,0)$, the alternate block optimization  on~\eqref{eq-generic-optim-dual} defines a sequence $(\iter{u_1},\iter{u_2})$, where, denoting $i=[\ell]_2$ (as defined in~\eqref{eq-modulo-2}) and $j=3-i \in \{1,2\}$, the update at iteration $\ell$ reads
\eql{\label{eq-block-min-dykstra}
	\iter{u_j} \eqdef \itA{u_j}
	\qandq
	\iter{u_i} \eqdef \uargmax{u_i} -\phi_i^*(u_i) - \Ga^*( q-u_i )
}
where we defined $q \eqdef \al-\itA{u_j}$.

Since in~\eqref{eq-generic-optim-dual} the coupling term $\Ga^*( \al-u_1-u_2 )$ between $(u_1,u_2)$ is smooth, a classical result ensures that $(\iter{u_1},\iter{u_2})$ converges to the solution $(u_1^\star,u_2^\star)$ of~\eqref{eq-generic-optim-dual}, see for instance~\cite{Ciarlet-Book}.
 
The primal problem associated to the dual maximization~\eqref{eq-block-min-dykstra} is
\eql{\label{eq-block-min-dykstra-primal}
	\umin{\pi_i} \Ga(\pi_i) - \dotp{q}{\pi_i} + \phi_i(\pi_i) = \BregDiv_\Ga( \pi_i|\nabla\Ga^*(q) ) + \phi_i(\pi_i) + C
}
where $C \in \RR$ is a constant.
The primal-dual relationship between the solutions of~\eqref{eq-block-min-dykstra} and~\eqref{eq-block-min-dykstra-primal} reads 
\eql{\label{eq-rel-primal-dual-proofdyk}
	\pi_i = \nabla\Ga^*(q-u_i).
}
Equations~\eqref{eq-block-min-dykstra} and~\eqref{eq-rel-primal-dual-proofdyk} show that one has
\eql{\label{eq-dyk-dual-iter}
	\iter{u_i} = \al-\itA{u_j} - \nabla \Ga \circ \Prox_{\phi_i}^{\BregDiv_\Ga}\pa{
		\nabla\Ga^*( \al - \itA{u_j} )
	}.
}

We now perform the following change of variables $(\iter{u_1},\iter{u_2}) \rightarrow (\iter{\pi},\iter{v})$
\eq{
	\iter{\pi} = \choice{
		\nabla\Ga^*( \al - \iter{u_1}-\itA{u_2} ) \qifq [\ell]_2=1, \\
		\nabla\Ga^*( \al - \itA{u_1}-\iter{u_2} ) \qifq [\ell]_2=2, 
	}
	\qandq
	\iter{v} = -\iter{u_{[\ell]_2}}.
}
One then verifies that these variables satisfy the relationship~\eqref{eq-it-bregman-2} and that~\eqref{eq-dyk-dual-iter} is equivalent to~\eqref{eq-it-bregman-1}. This shows by recursion that $(\iter{\pi},\iter{v})$ corresponds to Dykstra's variables. The convergence of $(\iter{u_1},\iter{u_2})$ toward $(u_1^\star,u_2^\star)$ implies that $\iter{\pi}$ converges to $\pi^\star \eqdef \nabla\Ga^*( \al - u_1^\star-u_2^\star )$ which is the solution of~\eqref{eq-generic-optim} thanks to the primal-dual relationship~\eqref{eq-primal-dual-bregman}.
\end{proof}

\subsection{Dykstra's Algorithm for $\KL$ divergence}

We now consider the case where $\Ga=E$, $\BregDiv_\Ga=\KL$. To ease the notations, we make the change of variables $\iter{z} \eqdef \nabla \Ga(\iter{v})$. One has that $\nabla \Ga = \log$ and $\nabla \Ga^* = \exp$ and thus one has the iterates 
\begin{align}
	\itz{\pi} &\eqdef \xi \qandq	\itz{z}=z^{(-1)} \eqdef \ones \\
	\label{eq-iter-dystra-1}
	 \foralls \ell>0, \quad \iter{\pi} &\eqdef  \Prox^{\KL}_{\phi_{[\ell]_2}}( \itA{\pi} \odot \itAA{z} ), \\
	\label{eq-iter-dystra-2}
	\iter{z} &\eqdef \itAA{z} \odot \frac{ \itA{\pi} }{ \iter{\pi} }.
\end{align}
Recall here that $\odot$ and $\frac{\cdot}{\cdot}$ denotes entry-wise operations.

\subsection{KL Proximal Operator for JKO Stepping}
\label{sec-kl-jko-algo}

In order to be able to apply iterations~\eqref{eq-iter-dystra-1} and~\eqref{eq-iter-dystra-2}, one needs to be able to compute the proximal operator for the $\KL$ divergence of $\phi_1$ and $\phi_2$. 

The following proposition shows that these proximal operators for the $\KL$ divergence can be indeed computed in closed form as long as one can compute in closed for the proximal operator of $f$ for the $\oKL$ divergence.

\begin{proposition}\label{prop-projection}
	For any $\pi \in \RR_+^{N \times N}$, one has 	
	\eql{\label{eq-formula-prox-explicit}
		\Prox_{\phi_1}^{\KL}(\pi) = \pi \diag\pa{ \frac{q}{\pi^T \ones} }
		\qandq
		\Prox_{\phi_2}^{\KL}(\pi) = \diag\pa{
					\frac{ \Prox_{\frac{\tau}{\ga} f}^{\oKL}(\pi \ones) }{\pi\ones}
				} \pi
	}
\end{proposition}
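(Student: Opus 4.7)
Both formulas rest on writing the first-order optimality conditions of the two strictly convex KL-proximal problems. Since $\KL(\cdot|\pi)$ is strictly convex and coercive on $\RR_+^{N\times N}$, and both $\phi_1,\phi_2$ are proper, convex, and lower-semicontinuous, each minimizer is unique, so it suffices to exhibit one that satisfies the optimality conditions. My plan is to decouple the row/column structure by a Lagrangian argument in each case, and then identify the scalar multipliers with quantities defined on marginals.

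For $\Prox_{\phi_1}^{\KL}(\pi)$, the problem reads $\uargmin_{\tilde\pi^T\ones=q}\KL(\tilde\pi|\pi)$, a KL projection onto an affine subspace of $\RR^{N\times N}$. I introduce Lagrange multipliers $u\in\RR^N$ for the $N$ column-sum equalities. Differentiating $\KL(\tilde\pi|\pi)+\dotp{u}{\tilde\pi^T\ones-q}$ in $\tilde\pi_{i,j}$ gives $\log(\tilde\pi_{i,j}/\pi_{i,j})+u_j=0$, hence $\tilde\pi_{i,j}=\pi_{i,j}\lambda_j$ with $\lambda_j\eqdef e^{-u_j}$. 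The constraint $\sum_i\tilde\pi_{i,j}=q_j$ then fixes $\lambda_j=q_j/(\pi^T\ones)_j$, and re-assembling the columns yields $\tilde\pi=\pi\diag(q/(\pi^T\ones))$.

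For $\Prox_{\phi_2}^{\KL}(\pi)$, I study $F(\tilde\pi)\eqdef\KL(\tilde\pi|\pi)+\tfrac{\tau}{\ga}f(\tilde\pi\ones)$ and write $0\in\partial F(\tilde\pi)$. The linear map $A\tilde\pi\eqdef\tilde\pi\ones$ is surjective onto $\RR^N$, which gives the qualification needed for the subgradient sum/chain rule, so the condition splits as: there exists $s\in\partial f(\tilde\pi\ones)$ with $\log(\tilde\pi_{i,j}/\pi_{i,j})+\tfrac{\tau}{\ga}s_i=0$ for all $i,j$. Thus $\tilde\pi_{i,j}/\pi_{i,j}$ depends only on $i$; writing $\mu_i\eqdef e^{-\frac{\tau}{\ga}s_i}$ one has $\tilde\pi=\diag(\mu)\pi$, and consequently $\tilde p\eqdef\tilde\pi\ones=\mu\odot(\pi\ones)$. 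Substituting back, the optimality condition becomes $\log(\tilde p_i/(\pi\ones)_i)+\tfrac{\tau}{\ga}s_i=0$ with $s\in\partial f(\tilde p)$, which is exactly $0\in\partial\bigl[\oKL(\cdot|\pi\ones)+\tfrac{\tau}{\ga}f\bigr](\tilde p)$; by definition this means $\tilde p=\Prox_{\frac{\tau}{\ga}f}^{\oKL}(\pi\ones)$. Solving for $\mu=\tilde p/(\pi\ones)$ yields the announced formula $\tilde\pi=\diag\bigl(\Prox_{\frac{\tau}{\ga}f}^{\oKL}(\pi\ones)/(\pi\ones)\bigr)\pi$.

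The only delicate point is the second formula: one needs to justify the subgradient chain rule for the nonsmooth composition $f\circ A$ (since the paper allows $f$ to be non-smooth, e.g.\ a box-constraint indicator), and then to recognize the reduced scalar optimality condition as that of the vector KL prox. Both are routine once the surjectivity of $A$ and the qualification~\eqref{eq-qualif} are invoked, but they are the conceptual heart of why a single row-scaling reduces a matrix problem to a vector one.
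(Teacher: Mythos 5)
Your proof is correct and follows essentially the same route as the paper: the paper's own proof simply invokes the ``lifting'' rules~\eqref{eq-prox-calculus-lifting} and~\eqref{eq-prox-calculus-lifting-transp} together with~\eqref{eq-prox-calculus-indic} from Appendix~\ref{sec-kl-calculus} in the case $M=1$, and those rules are themselves established by exactly the first-order optimality computation you carry out directly (the multiplier $z_m\ones^T$ there playing the role of your $s$, so that the optimal $\tilde\pi$ is a one-sided diagonal scaling of $\pi$ and the reduced condition is the vector $\oKL$ prox). Your added care about the subgradient chain rule for $\tilde\pi\mapsto f(\tilde\pi\ones)$ via surjectivity of $A$ is a welcome explicit justification of a step the paper leaves implicit.
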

\begin{proof}
	The computation of $\Prox_{\phi_1}^{\KL}$ is obtained by combining~\eqref{eq-prox-calculus-lifting-transp} and~\eqref{eq-prox-calculus-indic} in the special case $M=1$.
	The computation of $\Prox_{\phi_2}^{\KL}$ is obtained by applying~\eqref{eq-prox-calculus-lifting} in the special case of $M=1$ coupling.
\end{proof}

\subsection{Dykstra Algorithm for JKO Stepping}
\label{subsec-dykstra-jko}

Writing down the first order optimality conditions with respect to $\pi$ for problem~\eqref{eq-prox-step} shows that there exists $(a,b) \in (\RR_+^N)^2$ such that the optimal $\pi$ satisfies $\pi = \diag(a)\xi\diag(b)$. It means that, just as for the classical entropic regularization of optimal transport~\cite{CuturiSinkhorn}, the optimal coupling $\pi$ is a diagonal scaling of the initial Gibbs kernel $\xi$. This remark actually not only holds for the optimal $\pi$, but it also holds for each iterate $\iter{\pi}$ constructed by iterations~\eqref{eq-iter-dystra-1} and~\eqref{eq-iter-dystra-2} that defines $(\iter{\pi}, \iter{z}) \in (\RR_+^N)^2$.

The following proposition makes use of this remark and shows how to actually implement numerically iterations~\eqref{eq-iter-dystra-1} and \eqref{eq-iter-dystra-2} of the method in a fast and parallel way using only matrix-vector multiplications against the kernel $\xi$.

\newcommand{\myprox}{\ProxKL}
\renewcommand{\myprox}{\text{Prox}}

\begin{proposition}\label{prop-implementation-scaling}
The iterates of Dykstra's algorithm can be written as 
\eql{\label{eq-factor-format}
	\iter{\pi} = \diag(\iter{a}) \xi \diag(\iter{b})
	\qandq
	\iter{z} = \iter{u} v^{(\ell),T}
}
(i.e. $\iter{z}$ is a rank-1 matrix) where	$(\iter{a},\iter{b},\iter{u},\iter{v}) \in (\RR_{+,*}^N)^4$,  
with the initialization 
\eql{\label{eq-init-factor}
	\itz{a}=\itz{b}=\itz{u}=\itz{v}=\ones.
}
For odd $\ell$, the update of $(\iter{a},\iter{b})$ reads
\eql{\label{eq-update-formula-1}
	\iter{a} = \itA{a} \odot \itAA{u}
	\qandq
	\iter{b} = \frac{q}{\xi^T(\iter{a})}, 	
}
while for even $\ell$ it reads
\eql{\label{eq-update-formula-2}
	\iter{b} = \itA{b} \odot \itAA{v}
	\qandq 
	\iter{a} = \frac{
			\iter{p}
		}{\xi(\iter{b})}, 
}
where we defined 
\eql{\label{eq-defn-pell}
	\iter{p} \eqdef \Prox_{\frac{\tau}{\ga} f}^{\KL}( \itA{a} \odot \itAA{u} \odot \xi(\iter{b})).
}
The update of $(\iter{u},\iter{v})$ is always
\eql{\label{eq-update-formula-3}
	\iter{u}  = 	\itAA{u} \odot \frac{ \itA{a} }{ \iter{a} }
	\qandq
	\iter{v} = \itAA{v} \odot \frac{ \itA{b} }{ \iter{b} }.
}
\end{proposition}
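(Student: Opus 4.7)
The plan is to prove Proposition~\ref{prop-implementation-scaling} by induction on $\ell$, carrying along the joint invariant that $\iter{\pi} = \diag(\iter{a})\xi\diag(\iter{b})$ and $\iter{z} = \iter{u}\, v^{(\ell),T}$ with positive vectors. The base case is immediate: the initialization~\eqref{eq-init-factor} together with $\itz{\pi}=\xi$ and $\itz{z}=\ones$ matches $\diag(\ones)\xi\diag(\ones)$ and $\ones\ones^T$.

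The key algebraic observations that drive the induction are two elementary identities for matrices of the form $M = \diag(a)\xi\diag(b)$. First, if $w=uv^T$ is rank-one, then the entry-wise product satisfies $M \odot w = \diag(a\odot u)\,\xi\,\diag(b\odot v)$, since $(M\odot w)_{i,j} = a_i u_i\,\xi_{i,j}\,b_j v_j$. Second, if $M' = \diag(a')\xi\diag(b')$ with positive entries, then the entry-wise ratio $M/M'$ equals $(a/a')(b/b')^T$, which is rank-one. These two facts together are what make both the rank-one structure of $\iter{z}$ and the diagonal-scaling structure of $\iter{\pi}$ propagate through the updates~\eqref{eq-iter-dystra-1}--\eqref{eq-iter-dystra-2}.

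For the inductive step with $\ell$ odd, set $\tilde a \eqdef \itA{a}\odot \itAA{u}$ and $\tilde b \eqdef \itA{b}\odot\itAA{v}$. The first identity gives $\itA{\pi}\odot\itAA{z} = \diag(\tilde a)\xi\diag(\tilde b)$; applying the formula for $\Prox^{\KL}_{\phi_1}$ from Proposition~\ref{prop-projection}, which simply re-scales the columns so that the column sums equal $q$, one computes $(\iter{\pi})^T\ones = \tilde b\odot(\xi^T\tilde a)$ and obtains $\iter{\pi} = \diag(\tilde a)\xi\diag\!\bigl(q/(\xi^T\tilde a)\bigr)$. This yields exactly~\eqref{eq-update-formula-1}. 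For $\ell$ even, the same $\tilde a,\tilde b$ appear, and the formula for $\Prox^{\KL}_{\phi_2}$ from Proposition~\ref{prop-projection} re-scales the rows via $\Prox^{\oKL}_{\frac{\tau}{\ga}f}$ applied to $\tilde\pi\ones = \tilde a\odot\xi\tilde b$; writing this out, the factor $\tilde a$ cancels, leaving $\iter{b}=\tilde b$ and $\iter{a} = \iter{p}/\xi(\iter{b})$ with $\iter{p}$ as in~\eqref{eq-defn-pell}, matching~\eqref{eq-update-formula-2}.

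Finally, in both parities, the $z$-update~\eqref{eq-iter-dystra-2} reads $\iter{z} = \itAA{z}\odot(\itA{\pi}/\iter{\pi})$. By the induction hypothesis $\itAA{z} = \itAA{u}\,v^{(\ell-2),T}$, and by the second identity above $\itA{\pi}/\iter{\pi} = (\itA{a}/\iter{a})(\itA{b}/\iter{b})^T$; multiplying these two rank-one matrices entry-wise produces the rank-one matrix $\iter{u}\,v^{(\ell),T}$ with $(\iter{u},\iter{v})$ given by~\eqref{eq-update-formula-3}. Positivity of all four vectors at each step is preserved because $\xi>0$ entry-wise, $q\in\Si_N$, and $\Prox^{\oKL}_{\frac{\tau}{\ga}f}$ maps into $\RR_{+,*}^N$ (it lies in $\interop(\Dd)$, as recalled after~\eqref{eq-defn-proxKL}). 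The only subtle point, and the one I would double-check carefully, is matching the parity bookkeeping between the abstract iteration indices $(\ell,\ell-1,\ell-2)$ appearing in~\eqref{eq-iter-dystra-1}--\eqref{eq-iter-dystra-2} and the scalar vectors $(a,b,u,v)$, since only one of $(a,b)$ and only one of $(u,v)$ is genuinely updated at each step while the other is inherited—this is exactly what the formulas $\iter{a}=\itA{a}\odot\itAA{u}$ (odd) and $\iter{b}=\itA{b}\odot\itAA{v}$ (even) encode.
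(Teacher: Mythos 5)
Your proof is correct and follows essentially the same route as the paper's (which simply states that the format \eqref{eq-factor-format} is preserved by the updates \eqref{eq-formula-prox-explicit} and that the formulas follow by identification of terms); you have merely carried out that induction and term-identification explicitly, including the two rank-one/diagonal-scaling identities that make it work. The only point glossed over on both sides is that strict positivity of $\iter{b}$ in the odd step requires $q$ to have no zero entries, but your treatment is at least as careful as the paper's.
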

\begin{proof}
	One verifies that the format~\eqref{eq-factor-format} holds for the initialization~\eqref{eq-init-factor}
	and that it is maintained by the update formulas~\eqref{eq-formula-prox-explicit}. Formulas~\eqref{eq-update-formula-1}, \eqref{eq-update-formula-2} and \eqref{eq-update-formula-3} are obtained by identifying the different terms when plugging the format~\eqref{eq-factor-format} into the update formulas~\eqref{eq-formula-prox-explicit}.
\end{proof}

The Pseudo-code~\ref{table-pseudo-code} recaps all the successive steps needed to compute the full JKO flow~\eqref{eq-smooth-jko} with entropic smoothing. 
This resolution thus only requires to iteratively apply, until a suitable convergence criterion is met, the update rules~\eqref{eq-update-formula-1}, \eqref{eq-update-formula-2} and~\eqref{eq-update-formula-3}. In practice, we found that monitoring the violation of the constraint $\Cc_q$ to be both a simple and efficient way to enforce a stopping criterion This criterion allows furthermore to precisely enforce mass conservation, i.e. $p_t \in \Si_N$ stays normalized to unit mass, which is important in many practical cases. 

The crux of the method, that is extensively used in the numerical section (see in particular Section~\ref{subsec-kernel-comp}) is that one only needs to know how to apply the kernel $\xi$ and its adjoint $\xi^T$ (which are in most practical situations equals), which can be achieved either exactly or approximately in fast and highly parallelizable manner.

\begin{listing}[h!]
	\begin{enumerate}
		\item Initialize $t=0$ and $p_{t=0}$. 
		\item Initialize $\ell=1$ and set
		\eq{
			\itz{a}=\itz{b}=\itz{u}=\itz{v}=\ones.
		} 
		\item Setting $q \eqdef p_t$, update $(a^{(\ell)},b^{(\ell)})$ using~\eqref{eq-update-formula-1} is $\ell$
			is odd, and using~\eqref{eq-update-formula-2} if $\ell$ is even.		
		\item Update $(u^{(\ell)},v^{(\ell)})$ using~\eqref{eq-update-formula-3}.
		\item If $\norm{\iter{b} \odot \xi^T(\iter{a}) - q} > \epsilon$ or if $\ell$ is odd, 
			set $\ell \leftarrow \ell+1$
			and go back to step 3.
		\item Set $p_{t+1} = \iter{p}$ as defined by~\eqref{eq-defn-pell}, $t \leftarrow t+1$ and go back to step 2.
	\end{enumerate}
	\caption{%
		Iterations computing the full JKO flow. 
		The inputs are the initial density $p_{t=0}$, the parameters $(\ga,\tau)$ and the tolerance $\epsilon$.
		The outputs are the iterates $(p_t)_{t > 0}$.
	}
   \label{table-pseudo-code}
\end{listing}

\section{Numerical Results}
\label{sec-numerics}

We now illustrate the usefulness and versatility of our approach to compute approximate solutions to various non-linear diffusion processes. The videos showing the time evolutions displayed in the figures bellow are available online\footnote{\url{https://github.com/gpeyre/2015-SIIMS-wasserstein-jko/tree/master/videos}}.

\subsection{Exact and Approximate Kernel Computation}
\label{subsec-kernel-comp}

As already highlighted in Section~\ref{subsec-dykstra-jko}, our method is efficient if one can compute in a fast way the multiplication $\xi p$ between the Gibbs kernel $\xi=e^{-c/\ga}$ and a vector $p \in \RR^N$.  In the general case, this is intractable because this is a full matrix-vector multiplication. Even if $\xi$ usually has an exponential decay away from the diagonal, precisely capturing this decay is important to achieved transportation of mass effects. In particular, truncating the kernel to obtain a sparse matrix is prohibited. 

\paragraph{Translation invariant metrics}

The simplest setting is when the sampling points $(x_i)_{i=1}^N$ (as defined in Section~\ref{sec-entropic-jko}) correspond to an uniform grid discretization of a translation invariant distance, i.e. $c_{i,j} = D(x_i-x_j)^\al$ for  some function $D : \RR^d \rightarrow \RR$. In this case, $\xi$ is simply a discrete convolution against the kernel $D(\cdot)^\al$ sampled on an uniform grid. For instance, when $D(\cdot) = \norm{\cdot}$ and $\al=2$, $\xi$ is simply a convolution with a Gaussian kernel of width $\ga$. When using periodic or Neumann boundary conditions, it is thus possible to implement this convolution in $O(N\log(N))$ operations using Fast Fourier Transforms (FFT's). There also exists a flurry of linear time approximate convolutions, the most popular one being Deriche's factorization with IIR filters~\cite{deriche-1993}. We used this method to generate Figures~\ref{fig-influ-kappa} and~\ref{fig-porous}. The other figures require a more advanced treatment because the kernel is not translation invariant. We now detail this approach.

\paragraph{Riemannian metrics}

Unfortunately, many case of practical interest correspond to diffusions inside non-convex domains, or even on non-Euclidean domains, typically a Riemannian manifold $\Mm$ (possibly with a boundary). In this setting, the natural choice for the ground cost $c$ is to set $c_{i,j} = d_\Mm(x_i,x_j)^\al$, where $d_\Mm$ is the geodesic distance on the manifold. 
A major issue is that computing this matrix $c$ is costly, for instance it would take $O(N^2\log(N))$ using Fast-Marching technics~\cite{sethian-book} on a grid or a triangulated mesh of $N$ points. Even storing this non-sparse matrix can be prohibitive, and applying it at each step of the Dykstra algorithm would require $O(N^2)$ operations. 
Inspired by the ``geodesic in heat'' method of~\cite{crane-2013}, it has recently been proposed by~\cite{ConvolutionalOT} to speed up these computations by approximating the kernel $\xi$ by a Laplace or a heat kernel $\tilde\xi$ (depending on wether $\al=1$ or $\al=2$). This means that $c$ does not need to be pre-computed and stored, and that, as explained bellow, one can apply it on the fly at each iteration using a fast sparse linear solver. 
In the numerical tests, we have used this approximation. 

To this end, one only needs to have at its disposal a discrete approximation $\De_\Mm$ of the Laplacian operator on the manifold $\Mm$. This is easily achieved using axis-aligned finite differences for manifold discretized on a rectangular grid, and this is the case for Figures~\ref{fig-examples}. When considering a discretized manifold $\Mm$ which is a triangulated surface (as this is the case for Figure~\ref{fig-meshes}), one can use piecewise linear $P_1$ finite elements, and the operator $\De_\Mm$ is then the popular Laplacian with cotangent weights, see~\cite{botsch-2010}. 

Following~\cite{ConvolutionalOT}, the kernel $\xi$ is then approximated using $L \in \NN^*$ steps of implicit Euler time discretization of the resolution of the heat equation on $\Mm$ until time $\ga$, i.e.
\eql{\label{eq-heat-kernel}
	\xi \approx \tilde\xi \eqdef \pa{ \Id - \frac{\ga}{L} \Delta_\Mm }^{-L} 
}
where $(\cdot)^{-L}$ means that one iterates $L$ times matrix inversion. 

When $L=1$, and ignoring discretization errors, the result of Varadhan~\cite{varadhan-1967} shows that in the limit of small $\ga$, $\tilde\xi$ converges to the kernel $\xi$ obtained when using $\al=1$ (i.e. ``$W_1$'' optimal transport). As $L$ increases, $\tilde\xi$ converges to the heat kernel, which can be shown, also using~\cite{varadhan-1967} to be consistent with the case $\al=2$ (i.e. ``$W_2$'' optimal transport). In the numerical tests, we have used $L=10$. 

Numerically, the computation of matrix/vector multiplications $\tilde\xi p$ appearing the Dykstra updates thus requires the resolution of $L$ linear systems. Since these matrix/vector multiplications are computed many times during the iterations, a considerable speed-up is achieved by first pre-computing a sparse Cholesky factorization of $\Id - \frac{\ga}{L} \Delta_\Mm$ and then solving the $L$ linear systems by back-substitution~\cite{Davis:2006}. Although there is no theoretical guarantees, we observed numerically that each step typically has a linear time complexity because the factorization is indeed highly sparse. We refer to~\cite{crane-2013} for an experimental analysis of this class of Laplacian solvers.  

\subsection{Crowd Motion Model}

To model crowd motion, we follow~\cite{maury2010macroscopic}, where a congestion effect (not too many peoples can be at the same position) is enforced by imposing that the density $p$ of peoples follows a JKO flow with the functional $f$ defined as
\eql{\label{eq-dfn-congestion}
	\foralls p \in \RR^N, \quad
	f(p) \eqdef \iota_{[0,\kappa]^N}(p) + \dotp{w}{p}
}
where $\kappa \geq \normi{p_{t=0}}$ is the congestion parameter (the smaller, the more congestion)
and $w \in \RR^N$ is a potential (the mass should move along the gradient of $w$).

\newcommand{\myfigM}[3]{\includegraphics[width=.19\linewidth]{#1-kappa#3/#1-kappa#3-#2}}

\begin{figure}[h!]
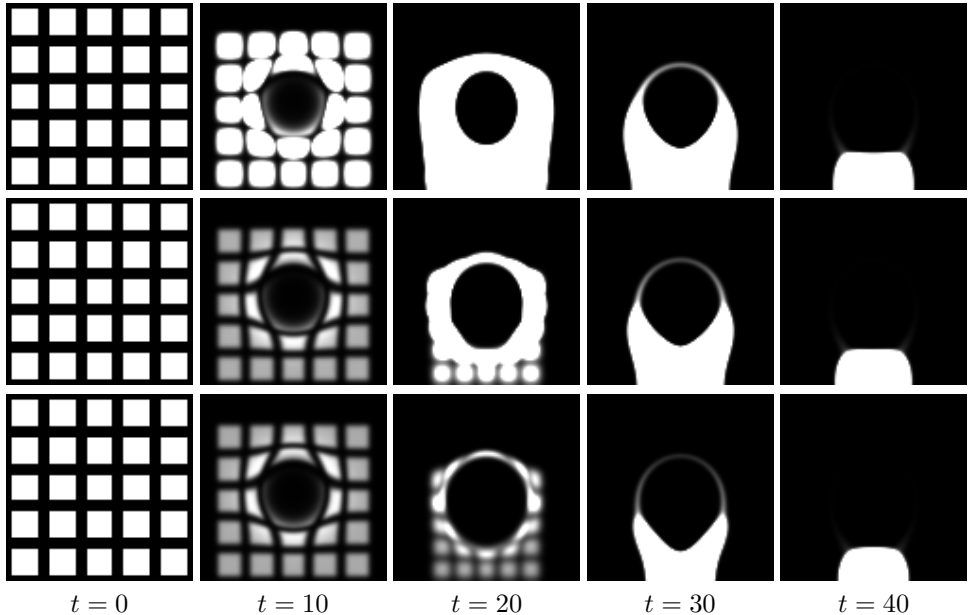

	\centering
	\begin{tabular}{@{}c@{\hspace{1mm}}c@{\hspace{1mm}}c@{\hspace{1mm}}c@{\hspace{1mm}}c@{}}
		\myfigM{bump}{1}{10}&
		\myfigM{bump}{2}{10}&
		\myfigM{bump}{5}{10}&
		\myfigM{bump}{10}{10}&
		\myfigM{bump}{20}{10}\\
		\myfigM{bump}{1}{20}&
		\myfigM{bump}{2}{20}&
		\myfigM{bump}{5}{20}&
		\myfigM{bump}{10}{20}&
		\myfigM{bump}{20}{20}\\
		\myfigM{bump}{1}{40}&
		\myfigM{bump}{2}{40}&
		\myfigM{bump}{5}{40}&
		\myfigM{bump}{10}{40}&
		\myfigM{bump}{20}{40}\\
		$t=0$ & $t=10$ & $t=20$ & $t=30$ & $t=40$ 
	\end{tabular}
	\caption{%
		Display of the influence of the congestion parameter $\kappa$ on the evolution (the driving potential $w$ is displayed on the upper-right corner of Figure~\ref{fig-examples}). 
		From top to bottom, the parameters are set to $\kappa/\normi{p_{t=0}} \in \{1, 2, 4\} \}$.
	}
   \label{fig-influ-kappa}
\end{figure}

For such a function, the KL proximal operator is easy to compute, as detailed in the following proposition.

\begin{prop}\label{prop-prox-congest}
	One has 
	\eql{\label{eq-dfn-congestion-prox}
		\foralls p \in \RR^N, \quad 
		\Prox_{\si f}^{\oKL}(p) = \min(p \odot e^{-\si w},\kappa)
	}	
	where the min should be understood components-wise. 
\end{prop}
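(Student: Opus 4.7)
The plan is to observe that the KL proximal problem separates component-wise, reduce it to a one-dimensional strictly convex problem on $[0,\kappa]$, and then show that the global unconstrained minimizer followed by a clipping to the interval solves it.

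First I would write out the definition: by~\eqref{eq-def-kl-prox-operator}, $\Prox_{\si f}^{\oKL}(p)$ is the unique minimizer over $\RR_+^N$ of
\[
G(r) \eqdef \sum_{i=1}^N \bigl[ r_i \log(r_i/p_i) - r_i + p_i + \si w_i r_i \bigr] + \iota_{[0,\kappa]^N}(r),
\]
using the definition~\eqref{eq-defn-kl} of $\oKL$ and the definition~\eqref{eq-dfn-congestion} of $f$. Since $G$ is a sum of one-dimensional functions of $r_i$ plus a separable indicator, the minimization decouples into $N$ independent scalar problems
\[
\umin{r_i \in [0,\kappa]} \; r_i \log(r_i/p_i) - r_i + \si w_i r_i,
\]
each of which is strictly convex and coercive on $[0,\kappa]$ (with the usual convention $0\log 0 = 0$).

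Next I would compute the unconstrained minimizer on $(0,+\infty)$ by differentiating, which yields $\log(r_i/p_i) + \si w_i = 0$, i.e.\ $r_i^\star = p_i e^{-\si w_i}$. Because $p \in \RR_+^N$ this candidate is automatically nonnegative, so the only active constraint that can bind is the upper one $r_i \leq \kappa$. By strict convexity of the scalar objective on $[0,\kappa]$, the constrained minimizer is obtained by projecting $r_i^\star$ onto $[0,\kappa]$, which (since $r_i^\star \geq 0$) reduces to $\min(p_i e^{-\si w_i},\kappa)$. Re-assembling the components gives exactly~\eqref{eq-dfn-congestion-prox}.

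No step really poses an obstacle here; the only mild subtlety is handling the $p_i = 0$ case (where the first-order argument would formally give $r_i = 0$, consistent with the clipping formula since $\min(0,\kappa)=0$) and checking that clipping a strictly convex scalar minimizer to an interval containing $0$ is indeed the constrained minimizer, which follows from monotonicity of the derivative.
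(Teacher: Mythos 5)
Your proof is correct and amounts to the same computation as the paper's: the paper first notes the $w=0$ case $\Prox_{\si \iota_{[0,\kappa]^N}}^{\oKL}(p)=\min(p,\kappa)$ and then absorbs the linear term via the shift rule~\eqref{eq-prox-calculus-shift}, whereas you unroll that shift rule directly in the first-order condition $\log(r_i/p_i)+\si w_i=0$, arriving at the same clipping of $p_i e^{-\si w_i}$. The component-wise separation, the strict-convexity/monotone-derivative justification for clipping, and the $p_i=0$ remark are all sound.
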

\begin{proof}
	The formula when $w=0$ is easy to show, and one then apply~\eqref{eq-prox-calculus-shift} in the case $M=1$.
\end{proof}

Note that it is of course possible to consider a $\kappa$ that is spatially varying to model a non-homogeneous congestion effect. 

Figure~\ref{fig-influ-kappa} shows the influence of the congestion parameter $\kappa$. This figure is obtained for the quadratic Euclidean cost $c_{i,j}=\norm{x_i-x_j}^2$ on a $N = 200 \times 200$ uniform grid with Neumann boundary conditions. The kernel $\xi$ is computed using a fast Gaussian filtering on this grid as detailed in Section~\ref{subsec-kernel-comp}.  

Figure~\ref{fig-examples} shows various evolutions for different potentials $w$ (guiding the crowd to the exit) on a manifold $\Mm$ which is a sub-set of a square in $\RR^2$. This means that locally the Riemannian metric is Euclidean, but since the domain is non-convex, the transport is defined according to a geodesic distance $d_\Mm$ which is not the Euclidean distance. The discretization is achieved using the approximate heat kernel~\eqref{eq-heat-kernel} with $L=10$ and on a grid of $N=100 \times 100$ points. 

\newcommand{\myfig}[2]{\includegraphics[width=.16\linewidth]{#1-kappa10/#1-kappa10-#2}}
\newcommand{\myfigPot}[1]{\includegraphics[width=.16\linewidth]{potentials/#1-potential}}

\begin{figure}[h!]
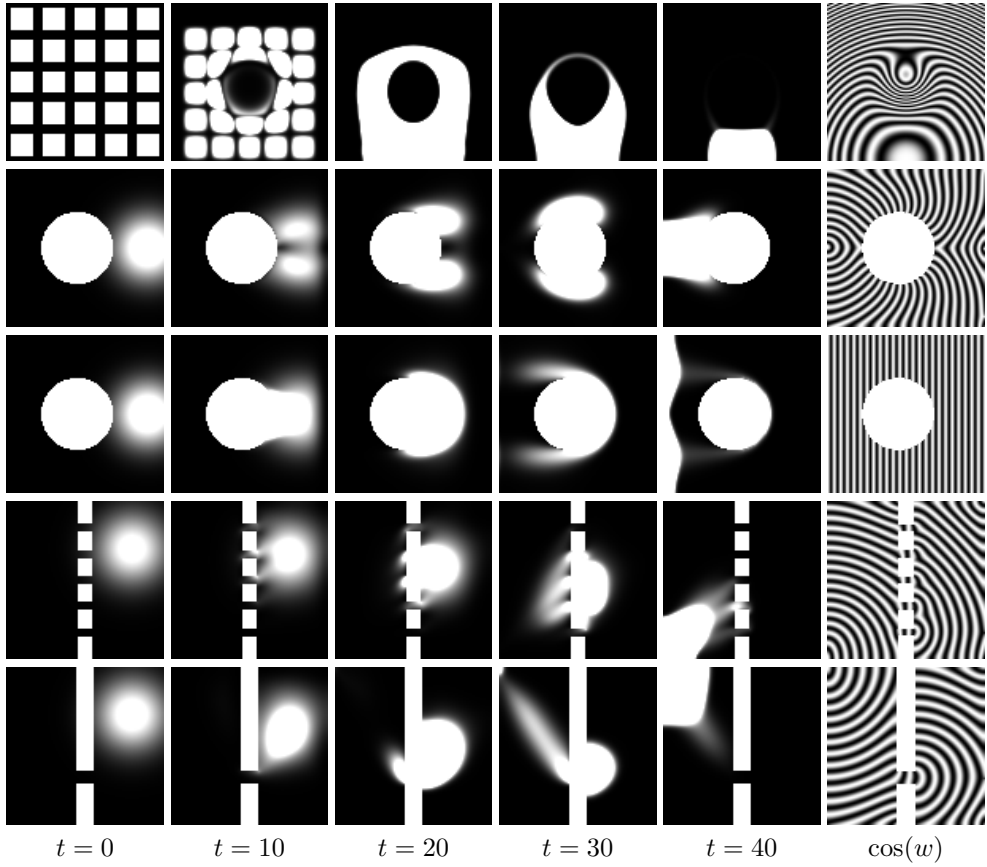

	\centering
	\begin{tabular}{@{}c@{\hspace{1mm}}c@{\hspace{1mm}}c@{\hspace{1mm}}c@{\hspace{1mm}}c@{\hspace{1mm}}c@{}}
		\myfig{bump}{1}&
		\myfig{bump}{2}&
		\myfig{bump}{5}&
		\myfig{bump}{10}&
		\myfig{bump}{20}&
		\myfigPot{bump} \\
		\myfig{disk}{1}&
		\myfig{disk}{2}&
		\myfig{disk}{5}&
		\myfig{disk}{10}&
		\myfig{disk}{20}&
		\myfigPot{disk} \\
		\myfig{disk1}{1}&
		\myfig{disk1}{2}&
		\myfig{disk1}{5}&
		\myfig{disk1}{10}&
		\myfig{disk1}{20}&
		\myfigPot{disk1} \\
		\myfig{holes}{1}&
		\myfig{holes}{2}&
		\myfig{holes}{5}&
		\myfig{holes}{10}&
		\myfig{holes}{20}&
		\myfigPot{holes} \\
		\myfig{tworooms}{1}&
		\myfig{tworooms}{2}&
		\myfig{tworooms}{5}&
		\myfig{tworooms}{10}&
		\myfig{tworooms}{20}&
		\myfigPot{tworooms}\\
		$t=0$ & $t=10$ & $t=20$ & $t=30$ & $t=40$ & $\cos(w)$
	\end{tabular}
	\caption{%
		Display of crowd evolution for $\kappa=\normi{p_{t=0}}$. 
		The rightmost column display equispaced level-sets of the driving potential $w$. 
	}
   \label{fig-examples}
\end{figure}

Lastly Figure~\ref{fig-meshes} shows the evolution on a triangulated mesh of $20000$ vertices, which is also implemented using the same heat kernel, but this time on a 3-D triangulation using piecewise linear finite elements (hence a discrete Laplacian with cotangent weights~\cite{botsch-2010}).

\renewcommand{\myfigPot}[1]{\includegraphics[width=.19\linewidth]{potentials/#1-potential}}

\begin{figure}[h!]
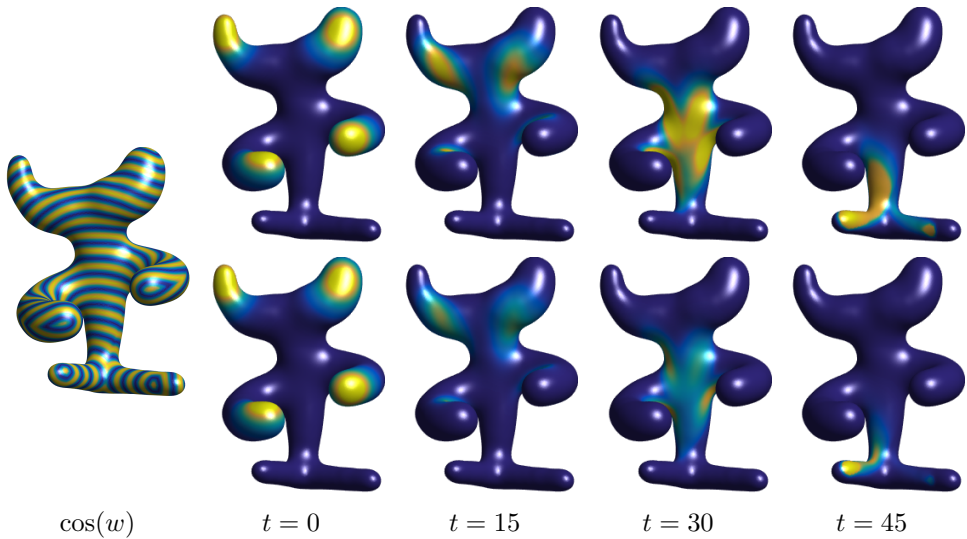

	\centering
	\begin{tabular}{@{}c@{\hspace{1mm}}c@{\hspace{1mm}}c@{\hspace{1mm}}c@{\hspace{1mm}}c@{}}
		\multirow{2}{*}[3em]{\myfigPot{moomoo}} &
		\myfigM{moomoo}{1}{10}&
		\myfigM{moomoo}{5}{10}&
		\myfigM{moomoo}{10}{10}&
		\myfigM{moomoo}{20}{10}\\
		&
		\myfigM{moomoo}{1}{60}&
		\myfigM{moomoo}{5}{60}&
		\myfigM{moomoo}{10}{60}&
		\myfigM{moomoo}{20}{60} \\
		$\cos(w)$ & $t=0$ & $t=15$ & $t=30$ & $t=45$ 
	\end{tabular}
	\caption{%
		Display of the evolution $p_t$ on a triangulated surface. 
		From top to bottom, the congestion parameter is set to $\kappa/\normi{p_{t=0}} \in \{1, 6\} \}$. 
	}
   \label{fig-meshes}
\end{figure}

\subsection{Anisotropic Diffusion Kernels}
\label{sec-anisotropic}

We consider the crowd motion functional~\eqref{eq-dfn-congestion} over measures defined on $\Mm = \RR^2$ now equipped with a Riemannian manifold structure defined by some tensor field $T(x) \in \RR^{d \times d}$ of symmetric positive matrices. We use the heat kernel approximation detailed in Section~\ref{subsec-kernel-comp}. The kernel~\eqref{eq-heat-kernel} thus corresponds to a discretization of an anisotropic diffusion, which are routinely used to perform image restoration~\cite{WeickertBook}. As the anisotropy (i.e. the maximum ratio between the maximum and minium eigenvalues) of the tensors increases, the corresponding linear PDE becomes ill-posed, and traditional discretizations using finite differences are inconsistent, leading to unacceptable artifacts. We thus use the adaptive anisotropic stencils recently proposed in~\cite{FehrenbachMirebeau} to define the sparse Laplacian matrix discretizing the manifold Laplacian $\Delta_\Mm u(x) = \text{div}( T(x) \nabla u(x) )$. This discrete Laplacian is able to cope with highly anisotropic tensor fields. This is illustrated in Figure~\ref{fig-anisotropic}, which shows the impact of the anisotropy on the trajectory of the mass. The potential $w$ creates an horizontal movement of the mass, but the circular shape of the tensor orientations forces the mass to rather follow a curved trajectory. Ultimately, mass accumulates on the left side and the congestion effect appears.

\renewcommand{\myfig}[2]{\includegraphics[width=.16\linewidth]{#1-kappa10/#1-kappa10-#2}}
\renewcommand{\myfigPot}[1]{\includegraphics[width=.16\linewidth]{potentials/#1-potential}}

\begin{figure}[h!]
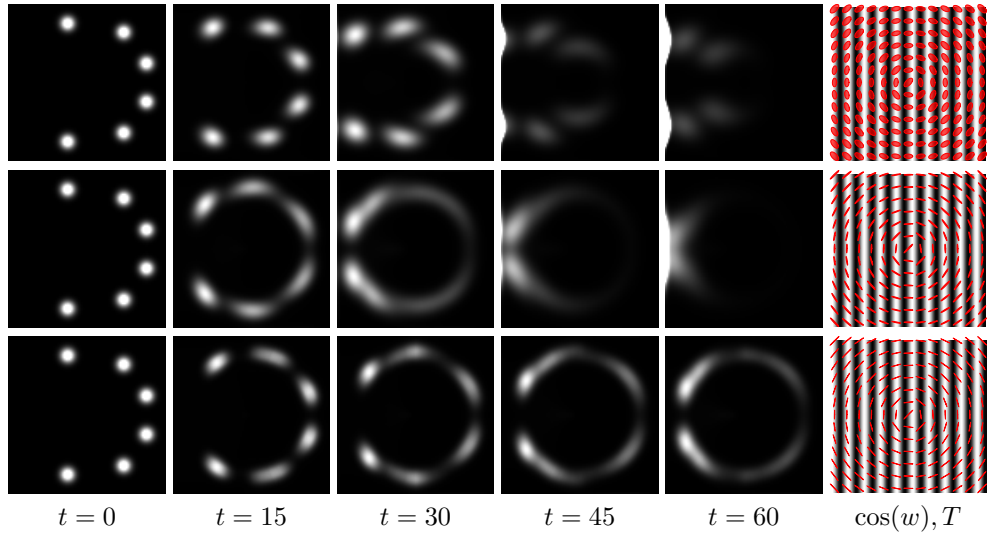

	\centering
	\begin{tabular}{@{}c@{\hspace{1mm}}c@{\hspace{1mm}}c@{\hspace{1mm}}c@{\hspace{1mm}}c@{\hspace{1mm}}c@{}}
		\myfig{aniso1}{1}&
		\myfig{aniso1}{5}&
		\myfig{aniso1}{10}&
		\myfig{aniso1}{15}&
		\myfig{aniso1}{20} &
		\myfigPot{aniso1} \\
		\myfig{aniso3}{1}&
		\myfig{aniso3}{5}&
		\myfig{aniso3}{10}&
		\myfig{aniso3}{15}&
		\myfig{aniso3}{20} & 
		\myfigPot{aniso3} \\
		\myfig{aniso4}{1}&
		\myfig{aniso4}{5}&
		\myfig{aniso4}{10}&
		\myfig{aniso4}{15}&
		\myfig{aniso4}{20} &
		\myfigPot{aniso4} \\
		$t=0$ & $t=15$ & $t=30$ & $t=45$ & $t=60$ & $\cos(w), T$ 
	\end{tabular}
	\caption{%
		Display of the evolution $p_t$ using anisotropic diffusion kernels.
		The right column displays in the background the level-sets of $w$ and the tensor field $T(x)$
		displayed as red ellipsoids. 
		An ellipsoid at point $x$ is oriented along the principal axis of $T(x)$, and the length/width ratio is proportional to the anisotropy of $T(x)$. The metric thus favors mass movements along the direction of the ellipsoids.
		The anisotropy (ratio between maximum and minimum eigenvalues of $T(x)$)
		is set respectively from top to bottom to $\{2, 10, 30\}$ in each of the successive rows.
	}
   \label{fig-anisotropic}
\end{figure}

\subsection{Non-linear Diffusions}

To model non-linear diffusion equations, we consider (possibly space-varying) generalized entropies
\eql{\label{eq-defn-gen-entropies}
	f(p) \eqdef \sum_i b_i e_{m_i}(p_i)
	\qwhereq
	\foralls m \geq 1, 
	e_m(s) \eqdef \choice{
		s (\log(s)-1) \qifq m=1, \\
		s \frac{s^{m-1}-m}{m-1}	\qifq m>1.
	}
}
Here $(b_i)_{i=1}^N$ is a set of weights $b_i \geq 0$ that enable a specially varying diffusion strength, while $(m_i)_{i=1}^N$ is a set of exponents that enable to make the evolution more non-linear at certain locations. Note that the case $m=1$ corresponds to minus the entropy defined in~\eqref{eq-entropy-defn}.

In the case of constant weights $b$ and exponents $m$, the gradient flows of these functionals lead to non-linear diffusions of the form $\partial_t p = \Delta p^m$. The case $m=1$ is the usual linear heat diffusion, as considered in the initial work of~\cite{jordan1998variational}. The case $m=2$ is the so-called porous medium equation~\cite{otto2001geometry}, where diffusion is slower in areas where the density $p$ is small. In particular, solutions might have a compact support that evolves in time, on contrary to the linear heat diffusion where mass can travel with infinite speed.  

The following proposition, whose proof follows from writing the first order condition of~\eqref{eq-defn-proxKL}, details how to compute the proximal operator of $h$.

\begin{prop}
The proximal operator of $f$ satisfies 
\eq{
	\Prox_{\si f}^{\oKL}(r) = (\Prox_{\si b_i e_{m_i}}^{\oKL}(r_i) )_{i=1}^N.
}
For $m=1$, the proximal operator of $e_1$ reads
\eql{\label{eq-prox-entropy}
	\foralls s>0, \quad \Prox_{\si e_1}^{\oKL}(s) = s^{\frac{1}{1+\si}}.
}
If $m \neq 1$, then for any $s>0$, $\ProxKL_{\si e_m}(s) = \psi$ is the unique positive root of the equation
\eql{\label{eq-prox-psi}
	\log(\psi) + m \si \frac{\psi^{m-1} - 1}{m-1}  = \log(s)
}
\end{prop}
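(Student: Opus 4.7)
The plan is to exploit the fact that both $\oKL(\cdot|r)$ and the functional $f$ defined in~\eqref{eq-defn-gen-entropies} are separable across coordinates, which reduces the $N$-dimensional proximal computation to $N$ independent one-dimensional problems. Then for each scalar sub-problem I would simply write and solve the first-order optimality condition, treating the case $m=1$ separately from $m>1$ because $e_m$ is defined piecewise.

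First I would observe that $\oKL(p|r) = \sum_i \bigl(p_i\log(p_i/r_i) - p_i + r_i\bigr)$ and $f(p) = \sum_i b_i e_{m_i}(p_i)$ are both sums of one-variable functions, so the minimization in~\eqref{eq-def-kl-prox-operator} decouples coordinate by coordinate. Each sub-problem is $\min_{p>0} [p\log(p/r_i) - p + r_i] + \sigma b_i e_{m_i}(p)$, which by definition is $\Prox_{\sigma b_i e_{m_i}}^{\oKL}(r_i)$, yielding the claimed componentwise factorization.

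Next, for a generic scalar sub-problem with datum $s>0$ and (rescaled) parameter $\sigma>0$, I would note that the derivative of $p \mapsto p\log(p/s) - p + s$ is $\log(p/s)$, so the first-order condition reads $\log(p/s) + \sigma\, e_m'(p) = 0$. In the case $m=1$ one has $e_1'(p) = \log(p)$, and this condition collapses to $(1+\sigma)\log(p) = \log(s)$, giving~\eqref{eq-prox-entropy} at once. For $m>1$, a direct differentiation of $e_m(p) = p(p^{m-1}-m)/(m-1)$ yields $e_m'(p) = m(p^{m-1}-1)/(m-1)$, and substituting into the first-order condition produces exactly~\eqref{eq-prox-psi}. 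Uniqueness of the positive root would follow from strict convexity of the scalar objective (since $\oKL(\cdot|s)$ is strictly convex on $\RR_{+,*}$ and $e_m''(p) = m\,p^{m-2} > 0$ for $p>0$, $m>1$), or equivalently from the observation that the left-hand side of~\eqref{eq-prox-psi}, as a function of $\psi>0$, is strictly increasing and surjective onto $\RR$. No real obstacle is foreseen; the proof is essentially a routine KKT computation once separability is noted, and the only bookkeeping is to absorb the coefficient $b_i$ into $\sigma$ when passing to the scalar problem.
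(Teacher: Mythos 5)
Your proposal is correct and follows essentially the same route as the paper, which simply states that the proof "follows from writing the first order condition" of the proximal problem: you make the coordinatewise separability explicit, derive the same first-order conditions for $m=1$ and $m>1$, and justify uniqueness via monotonicity of the left-hand side of~\eqref{eq-prox-psi}. No gaps; your write-up is in fact more complete than the paper's one-line justification.
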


In the numerical applications, we compute $\ProxKL_{\si e_m}$ by using a few steps of Newton iterations to solve~\eqref{eq-prox-psi}, which can be parallelized over all the grid's locations. Figure~\ref{fig-Psi} shows examples of the energy $e_m$ and the corresponding proximal maps $\ProxKL_{\si e_m}$. They act as pointwise non-linear thresholding operators that are applied iteratively on the probability distribution being computed. In some sense, the congestion term~\eqref{eq-dfn-congestion} and the corresponding proximal operator~\eqref{eq-dfn-congestion-prox} can be understood as a limit of this model as $m \rightarrow +\infty$.

\newcommand{\myfigProx}[1]{\includegraphics[width=.32\linewidth]{prox/#1}}

\begin{figure}[h!]
	\centering
	\begin{tabular}{@{}c@{\hspace{1mm}}c@{\hspace{1mm}}c@{}} 
		\myfigProx{entropies3} &
		\myfigProx{prox_sigma1} &
		\myfigProx{prox_sigma3} \\ 
		$e_m$ & $\ProxKL_{\si e_m}, \sigma=1$ & $\ProxKL_{\si e_m}, \sigma=3$ \\
	\end{tabular}
	\caption{%
		Display of the graphs of functions $e_m$ and $\ProxKL_{\si e_m}$ for some values of $(\si,m)$.
	}
   \label{fig-Psi}
\end{figure}

Figure~\eqref{fig-porous} shows illustration of the models in the case where either $b$ or $m$ is varying, thus producing a spatially varying flow. The initial distribution $p_{t=0}$ is computed as a white noise realization, where the pixels are independently and identically drawn according to a uniform distribution on $[0,1]$ (and then $p$ is normalized to unit mass). 

\newcommand{\myfigPor}[2]{\includegraphics[width=.19\linewidth]{rand-varying-#1/rand-varying-#1-#2}}

\begin{figure}[h!]
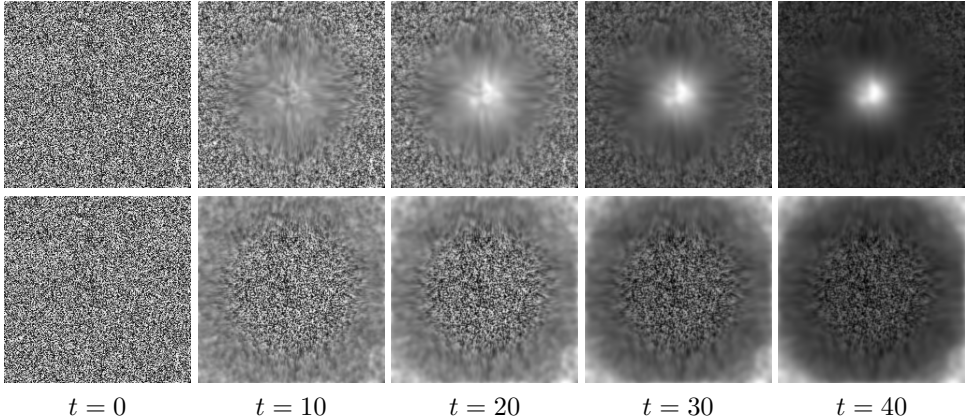

	\centering
	\begin{tabular}{@{}c@{\hspace{1mm}}c@{\hspace{1mm}}c@{\hspace{1mm}}c@{\hspace{1mm}}c@{}}
		\myfigPor{e}{1}&
		\myfigPor{e}{5}&
		\myfigPor{e}{10}&
		\myfigPor{e}{15}&
		\myfigPor{e}{20}\\
		\myfigPor{m}{1}&
		\myfigPor{m}{5}&
		\myfigPor{m}{10}&
		\myfigPor{m}{15}&
		\myfigPor{m}{20}\\
		$t=0$ & $t=10$ & $t=20$ & $t=30$ & $t=40$ 
	\end{tabular}
	\caption{%
		Non-linear diffusion by gradient flow on the generalized entropies~\eqref{eq-defn-gen-entropies}. 
		Top row: fixed $m_i=1.4$ and varying weights $b_i \in [1,20]$ (1 in the boundary, 20 in the center).
		Bottom row: fixed $b_i=1$ and varying exponents $m_i \in [1,2]$ (1 in the boundary, $2$ in the center).
	}
   \label{fig-porous}
\end{figure}

\subsection{Non-convex Functionals}

It is formally possible to apply Dykstra's algorithm detailed in Section~\ref{sec-kl-jko-algo} to a non-convex function $f$, if one is able to compute in closed form the proximal operator~\eqref{eq-defn-proxKL} (which then might be a multi-valued map). Of course there is no hope for the resulting non-convex Dykstra's algorithm to converge in general to the global minimizer of the non-convex optimization~\eqref{eq-defn-KL-prbm}. Even worse, to the best of our knowledge, there is currently no proof that the non-convex Dykstra's algorithm converges to a stationary point of the energy, even in the case of an Euclidean divergence. However, we found that applying Dykstra's algorithm to non-convex functions works remarkably well in practice. Note that the closely related Douglas-Rachford (DR) algorithm is known to converge in some particular non-convex cases~\cite{ArtachoNonCvx}. DR is known to perform very well on several non-convex optimization problems such as phase retrieval~\cite{BauschkeNonCvx}. 

To test this non-convex setting, we replace the congestion box constraint~\eqref{eq-dfn-congestion} by the non-convex function
\eql{\label{eq-dfn-congestion-noncvx}
	f(p) \eqdef \iota_{\{0,\kappa\}^N} + \dotp{w}{p}.
}
This function enforces that the thought after solution is binary, so that each value $p_i$ is in $\{0,\kappa\}$.
The proximal operator of this non-convex function can be computed explicitly using
\eq{
	\foralls i \in \{1,\ldots,N\}, \quad
	\Prox_{\si \iota_{\{0,\kappa\}^N}}^{\oKL}(p)_i = \choice{
		0 \qifq p_i < \kappa/e, \\
		\{0,\kappa\} \qifq p_i = \kappa/e, \\
		\kappa \qifq p_i > \kappa/e, 
	}
}
where $e=\exp(1)$. 
Note that $\Prox_{\si \iota_{\{0,\kappa\}^N}}^{\oKL}(p)_i$ is multi-valued at $p_i = \kappa/e$, and numerically one needs to chose one of the two values. 
Figure~\ref{fig-nonconvex} shows a comparison of the evolutions obtained with the convex and non-convex functionals. The non-convex one suffers from binary noise artefacts, which could be partly due to the non-convexity, but also to the amplification of discretization errors by the  proximal mapping which is not Lipschitz continuous. 

\renewcommand{\myfig}[2]{\includegraphics[width=.19\linewidth]{#1-kappa10/#1-kappa10-#2}}

\begin{figure}[h!]
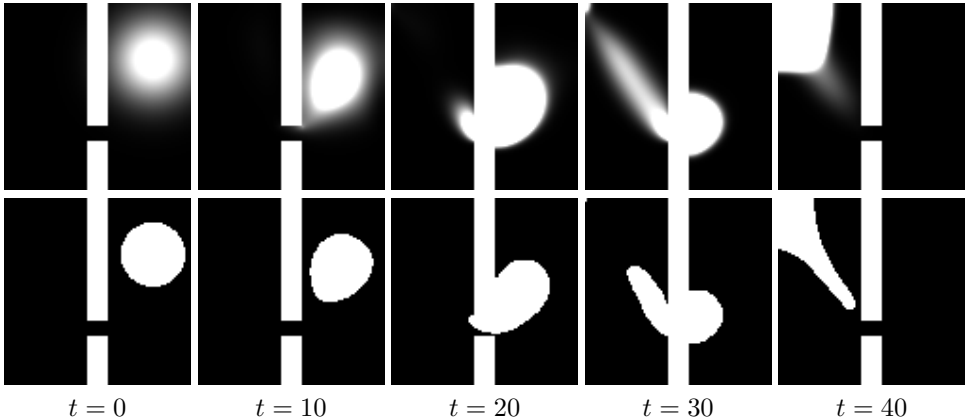

	\centering
	\begin{tabular}{@{}c@{\hspace{1mm}}c@{\hspace{1mm}}c@{\hspace{1mm}}c@{\hspace{1mm}}c@{}}
		\myfig{tworooms}{1}&
		\myfig{tworooms}{2}&
		\myfig{tworooms}{5}&
		\myfig{tworooms}{10}&
		\myfig{tworooms}{20}\\
		\myfig{tworooms-noncvx}{1}&
		\myfig{tworooms-noncvx}{2}&
		\myfig{tworooms-noncvx}{5}&
		\myfig{tworooms-noncvx}{10}&
		\myfig{tworooms-noncvx}{20}\\
		$t=0$ & $t=10$ & $t=20$ & $t=30$ & $t=40$
	\end{tabular}
	\caption{%
		Display of crowd evolution for $\kappa=\normi{p_{t=0}}$. 
		Top row: convex function~\eqref{eq-dfn-congestion}.
		Bottom row: non-convex function~\eqref{eq-dfn-congestion-noncvx}.
	}
   \label{fig-nonconvex}
\end{figure}


\section{More General Functionals}
\label{sec-general-functinons}

In order to highlight the power of the proposed entropic regularization approach, we show here how to adapt the algorithm detailed in Section~\ref{sec-bregman-prox} in order to deal with more involved functionals. These functionals require the introduction of several couplings, which in turn necessitates to develop a generic iterative scaling procedure derived from Dykstra's algorithm. This new method has its own interest, beyond the computation of Wasserstein gradient flows.

\subsection{A Generic Diagonal Scaling Algorithm}

In order to tackle a more general class of functions $f$, we consider here a generalization of problem~\eqref{eq-defn-KL-prbm} where one wants to optimize over a family $\pi=(\pi_1,\ldots,\pi_M)$ of $M$ couplings $\pi_m \in \RR^{N \times N}$ a functional of the form 
\eql{\label{eq-pbm-variational-generic}
	\umin{\pi \in (\RR^{N \times N})^M } \KL_\la(\pi|\xi) + \phi_1(\pi) + \phi_2(\pi)
	\qwhereq
	\choice{
		\phi_1(\pi) \eqdef \psi_1(\pi \ones), \\
		\phi_2(\pi) \eqdef \psi_2(\pi^T \ones), 
	}
}
where, for $\la \in \RR_+^M$, $\KL_\la$ is the weighted KL divergence (see also~\eqref{eq-defn-okl-lambda})
\eq{
	\foralls (\pi,\xi) \in (\RR^{N \times N})^M \times (\RR^{N \times N})^M, \quad
	\KL_\la(\pi|\xi) = \sum_{m=1}^M \la_m \KL(\pi_m|\xi_m)
}
and where we denoted, with a slight abuse of notations, the collection of left and right marginals as
\eq{
	\pi \ones = (\pi_1\ones,\ldots,\pi_M \ones) \in (\RR^N)^M
	\qandq
	\pi^T \ones = (\pi_1^T\ones,\ldots,\pi_M^T \ones) \in (\RR^N)^M
}
and $\psi_i : (\RR^N)^M \rightarrow \RR$ are convex functions for which one can compute the proximal operator $\Prox^{\oKL_\la}_{\psi_i}$ according to the $\oKL_\la$ divergence.

We wish to apply Dykstra's iterations~\eqref{eq-iter-dystra-1} and~\eqref{eq-iter-dystra-2} to~\eqref{eq-pbm-variational-generic}. This requires to compute the proximal operator of the functions $\phi_i$. The following proposition details how to achieve this using the proximal operator of the functions $\psi_i$ alone.

\begin{proposition}	
	We denote, for $i=1,2$, $\pi^{[i]} \eqdef \Prox^{\KL_\la}_{\phi_i}(\pi)$.
	We denote, for $m=1,\ldots,M$, $\tilde p_m^{[1]} \eqdef \pi_m \ones$ and $\tilde p_m^{[2]} \eqdef \pi_m^T \ones$.
	One has
	\begin{align*}
	\foralls m \in \{1,\ldots,M\}, \quad
		\pi_m^{[1]} &= \diag\pa{ \frac{p_m^{[1]}}{\tilde p_m^{[1]}} } \pi_m  \qandq
		\pi_m^{[2]} = \pi_m \diag\pa{ \frac{p_m^{[2]}}{\tilde p_m^{[2]}} } \\
		\qwhereq \foralls i \in \{1,2\}, \quad
		(p^{[i]})_m & = \Prox^{\oKL_\la}_{\psi_i}\pa{ (\tilde p^{[i]})_m }.
	\end{align*}
\end{proposition}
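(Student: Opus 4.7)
The plan is to reduce the proximal computation to a two-stage minimization: first optimize the coupling with its marginals held fixed, then optimize over the marginals. The key point is that both $\phi_1$ and $\phi_2$ depend on $\pi$ only through its row (resp. column) marginals, so the inner stage decouples across the index $m$ and becomes a classical I-projection problem whose solution is an explicit diagonal scaling.

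I would start from the defining minimization
\[
\Prox^{\KL_\la}_{\phi_1}(\pi) = \uargmin{\tilde\pi \in (\RR^{N\times N})^M} \KL_\la(\tilde\pi|\pi) + \psi_1(\tilde\pi\ones),
\]
introduce the auxiliary variable $p = \tilde\pi\ones \in (\RR^N)^M$, and rewrite this as a nested minimization: the outer problem over $p$, the inner problem over the $\tilde\pi_m$'s subject to $\tilde\pi_m\ones = p_m$. For each $m$ separately, the inner problem is
\[
\umin{\tilde\pi_m \ones = p_m} \la_m \KL(\tilde\pi_m|\pi_m),
\]
i.e. the KL projection of $\pi_m$ on the affine set of matrices with prescribed row marginal $p_m$. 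Writing out the first order conditions (or equivalently Lagrange multipliers for the row-sum constraints) immediately gives the row-scaling form $\tilde\pi_m^\star = \diag(p_m/\tilde p_m^{[1]})\pi_m$ where $\tilde p_m^{[1]} = \pi_m \ones$.

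The main (but straightforward) calculation is then to plug this $\tilde\pi_m^\star$ back into $\KL(\tilde\pi_m^\star|\pi_m)$: the $\log$ factor equals $\log(p_{m,i}/\tilde p_{m,i}^{[1]})$, it depends only on $i$, and summing over $j$ replaces $\pi_{m,ij}$ by $\tilde p_{m,i}^{[1]}$, so one obtains exactly $\oKL(p_m|\tilde p_m^{[1]})$. This collapses the outer problem to
\[
\umin{p \in (\RR^N)^M} \oKL_\la(p|\tilde p^{[1]}) + \psi_1(p),
\]
which is by definition $\Prox^{\oKL_\la}_{\psi_1}(\tilde p^{[1]})$, giving the stated formula $(p^{[1]})_m = \Prox^{\oKL_\la}_{\psi_i}(\tilde p^{[1]})_m$ and then $\pi_m^{[1]} = \diag(p_m^{[1]}/\tilde p_m^{[1]})\pi_m$.

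The case of $\phi_2$ is identical after transposition: applying the above argument to $\pi^T$ (which only swaps rows and columns in the KL divergence) produces a right-diagonal scaling instead of a left-diagonal one, yielding $\pi_m^{[2]} = \pi_m \diag(p_m^{[2]}/\tilde p_m^{[2]})$. The only nontrivial bookkeeping is the inner I-projection identity, and this is exactly the generalization to $M$ couplings of the calculus rules invoked for the $M=1$ case in Proposition~\ref{prop-projection}.
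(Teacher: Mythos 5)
Your argument is correct, but it is not the route the paper takes: the paper's proof is a one-line reduction to the ``lifting'' calculus rules \eqref{eq-prox-calculus-lifting} and \eqref{eq-prox-calculus-lifting-transp} of Appendix~\ref{sec-kl-calculus}, which are themselves established there by writing the first-order optimality condition of \eqref{eq-defn-proxKL} with a subgradient $(z_m)_m \in \partial h(\tilde p_1,\ldots,\tilde p_M)$ and reading off the diagonal scaling $\tilde\pi_m = \diag(e^{-z_m/\la_m})\pi_m$. You instead use a partial-minimization (marginalization) argument: freeze the row marginals $p$, solve the inner I-projection onto the affine set $\{\tilde\pi_m\ones = p_m\}$ in closed form as a left diagonal scaling, observe that the optimal inner value collapses to $\oKL(p_m|\tilde p_m^{[1]})$ because the log-ratio is constant along rows, and recognize the outer problem as $\Prox^{\oKL_\la}_{\psi_1}$; transposition handles $\phi_2$. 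The two proofs are essentially dual to one another. Yours has the advantage of being self-contained and of explaining \emph{why} the lifting rule holds (the KL divergence decomposes into a marginal term plus a conditional term that vanishes at the optimum), and it sidesteps subdifferential bookkeeping; the paper's version is shorter because the heavy lifting is delegated to a reusable calculus rule that is also invoked elsewhere (e.g.\ in Proposition~\ref{prop-projection}). The only point worth making explicit in your write-up is that the interchange of the two minimizations is legitimate and that the inner minimizer is attained and unique, which holds here because the iterates and kernels have strictly positive entries so that $\tilde p_m^{[1]}$ and $\tilde p_m^{[2]}$ never vanish.
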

\begin{proof}
	This corresponds to an application of formulas~\eqref{eq-prox-calculus-lifting} and~\eqref{eq-prox-calculus-lifting-transp}.
\end{proof}

The following proposition, which is similar to Proposition~\ref{prop-implementation-scaling}, explains how to implement the iterations of Dykstra's algorithm using only multiplications with the kernels $(\xi_m)_m$. 

\begin{proposition}\label{prop-implementation-generic}
The iterates $\iter{\pi} = (\iter{\pi_1},\ldots,\iter{\pi_M})$ of Dykstra's algorithm can be written  as
\eq{
	\foralls m \in \{1,\ldots,M\}, \quad
	\iter{\pi_m} = \diag(\iter{a_m}) \xi_m \diag(\iter{b_m})
	\qandq
	\iter{z_m} = \iter{u_m} v_m^{(\ell),T}.
}
with the initialization 
\eq{
	\foralls m \in \{1,\ldots,M\}, \quad
	\itz{a_m}=\itz{b_m}=\itz{u_m}=\itz{v_m} \eqdef \ones.
}
We define, $\foralls m \in \{1,\ldots,M\}$, 
\begin{align*}
	\itA{\tilde a_m} &\eqdef \itA{a_m} \odot \itAA{u_m} \qandq
	\itA{\tilde b_m} \eqdef \itA{b_m} \odot \itAA{v_m}, \\
	(p_m)_m &\eqdef \Prox^{\oKL_\la}_{\psi_{[\ell]_2}}( (\tilde p_m)_m )
	\qwhereq
	\tilde p_m \eqdef \choice{
			\itA{\tilde a_m} \odot \xi_m( \itA{\tilde b_m} ) \qifq [\ell]_2=1, \\
			\itA{\tilde b_m} \odot \xi_m^T( \itA{\tilde a_m} ) \qifq [\ell]_2=2.
	}		
\end{align*}
The update reads, $\foralls m \in \{1,\ldots,M\}$, 
\begin{align*}
	\iter{a_m} &\eqdef \choice{
			p_m \odot [\xi_m(\itA{\tilde b_m})]^{-1} \qifq [\ell]_2=1 \\
			\itA{\tilde a_m} \qifq [\ell]_2=2
		} \\
	\iter{b_m} &\eqdef \choice{
			\itA{\tilde b_m} \qifq [\ell]_2=1 \\
			p_m \odot [\xi_m^T(\itA{\tilde a_m})]^{-1} \qifq [\ell]_2=2
		} \\
	\iter{u_m}  &\eqdef 	\itAA{u_m} \odot \frac{ \itA{a_m} }{ \iter{a_m} }
	\qandq
	\iter{v_m} \eqdef \itAA{v_m} \odot \frac{ \itA{b_m} }{ \iter{b_m} }.
\end{align*}
\end{proposition}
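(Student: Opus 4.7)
The plan is to prove Proposition~\ref{prop-implementation-generic} by induction on $\ell$, exactly mirroring the strategy used for Proposition~\ref{prop-implementation-scaling}, with the only new ingredient being that each coupling $\pi_m$ now evolves independently via its own kernel $\xi_m$ and its own scaling factors $(a_m,b_m,u_m,v_m)$, while the coupling across the index $m$ enters only through the evaluation of the proximal operator $\Prox^{\oKL_\la}_{\psi_{[\ell]_2}}$ applied to the collection of left or right marginals. So the proof is structural: check the factored format at $\ell=0$, then show it is preserved by one Dykstra half-step, and finally read off the scalar update rules by matching coefficients.

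First I would check the base case: at $\ell=0$ the initialization $\itz{a_m}=\itz{b_m}=\itz{u_m}=\itz{v_m}=\ones$ gives $\itz{\pi_m}=\xi_m$ (consistent with the initialization $\itz{\pi}\eqdef\xi$ of Dykstra) and $\itz{z_m}=\ones\ones^T$, which is rank-1. Next, assuming the factorization $\itA{\pi_m}=\diag(\itA{a_m})\xi_m\diag(\itA{b_m})$ and $\itAA{z_m}=\itAA{u_m}(\itAA{v_m})^T$ at the previous steps, compute the argument of the proximal operator in~\eqref{eq-iter-dystra-1}, namely $\itA{\pi_m}\odot\itAA{z_m}=\diag(\itA{a_m}\odot\itAA{u_m})\,\xi_m\,\diag(\itA{b_m}\odot\itAA{v_m})$, which is exactly $\diag(\itA{\tilde a_m})\,\xi_m\,\diag(\itA{\tilde b_m})$. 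Its left marginal is $\itA{\tilde a_m}\odot\xi_m(\itA{\tilde b_m})$ and its right marginal is $\itA{\tilde b_m}\odot\xi_m^T(\itA{\tilde a_m})$, giving the two $\tilde p_m$ expressions in the statement.

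Then I would apply the previous proposition: $\Prox^{\KL_\la}_{\phi_1}$ rescales each coupling on the left by $p_m/\tilde p_m$, and $\Prox^{\KL_\la}_{\phi_2}$ rescales on the right by the same ratio with $p_m=\Prox^{\oKL_\la}_{\psi_{[\ell]_2}}(\tilde p_m)$. For $[\ell]_2=1$, this multiplies $\itA{\tilde a_m}$ by $p_m/[\xi_m(\itA{\tilde b_m})\odot\itA{\tilde a_m}]\cdot \itA{\tilde a_m} = p_m/\xi_m(\itA{\tilde b_m})$, leaving $\itA{\tilde b_m}$ untouched on the right; for $[\ell]_2=2$ the symmetric statement holds. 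Identifying the new $(\iter{a_m},\iter{b_m})$ as the left/right factors of $\iter{\pi_m}=\diag(\iter{a_m})\xi_m\diag(\iter{b_m})$ gives precisely the update rules in the proposition.

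Finally I would handle $\iter{z_m}$. By~\eqref{eq-iter-dystra-2}, $\iter{z_m}=\itAA{z_m}\odot\itA{\pi_m}/\iter{\pi_m}$, and since both $\itA{\pi_m}$ and $\iter{\pi_m}$ factor through the same kernel $\xi_m$, their entry-wise ratio is a rank-1 matrix $(\itA{a_m}/\iter{a_m})(\itA{b_m}/\iter{b_m})^T$, so multiplying entry-wise by the rank-1 matrix $\itAA{u_m}(\itAA{v_m})^T$ preserves the rank-1 structure and yields exactly $\iter{u_m}(\iter{v_m})^T$ with the stated formulas. The only conceptual obstacle is keeping track of which of the two half-steps modifies which factor, which is resolved by noting that the left/right projection asymmetry of $\phi_1$ versus $\phi_2$ exactly matches the $(a_m,b_m)$ split; the $(u_m,v_m)$ accumulators inherit the same parity via~\eqref{eq-iter-dystra-2}. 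An induction then closes the argument.
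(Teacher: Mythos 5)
Your proof is correct and follows essentially the same route as the paper, which merely asserts (by analogy with Proposition~\ref{prop-implementation-scaling}) that one verifies the factored format at initialization and checks it is preserved by the updates, identifying coefficients. Your write-up simply makes that verification explicit — the base case, the entrywise factorization of $\itA{\pi_m}\odot\itAA{z_m}$, the left/right rescaling from the marginal proximal step, and the rank-one structure of the entrywise ratio $\itA{\pi_m}/\iter{\pi_m}$ (which uses the strict positivity of $\xi_m$) — all of which is sound.
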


\subsection{Wasserstein Attraction with Congestion}

We now give a first concrete example of functional $f$ for which the formulation~\eqref{eq-pbm-variational-generic} should be used in place of~\eqref{eq-defn-KL-prbm}. 

Instead of advecting the mass of $p_t$ according to a fixed potential $w$ as it is considered in the functional~\eqref{eq-dfn-congestion}, it is possible to make it evolve toward a ``target'' distribution $r \in \Si_N$ by minimizing the Wasserstein distance between $p_t$ and $r$. It thus consists in considering the gradient flow of the function 
\eql{\label{eq-def-f-attrac}
	\foralls p \in \Si_N, \quad
	f(p) = W_\ga(r,p) + h(p) + \iota_{\Si_N}(p), 
} 
where $h(p)$ is a function for which one can compute its $\oKL$ proximal operator as defined in~\eqref{eq-def-kl-prox-operator}. 

We now denote $q \eqdef p_t$ the previous iterate, and aim at solving a single JKO step~\eqref{eq-prox-step}. It is not possible to compute in closed form the $\oKL$ proximal operator of the function $f$ defined in~\eqref{eq-def-f-attrac}, so that the algorithm detailed in Section~\ref{sec-bregman-prox} is not directly applicable. 

Instead, we re-formulate~\eqref{eq-prox-step} as a KL minimization of the form~\eqref{eq-pbm-variational-generic} involving $M=2$ couplings $\pi=(\pi_1,\pi_2) \in (\RR^{N \times N})^2$ and kernels $\xi=(\xi_1,\xi_2) \eqdef (e^{-c/\ga},e^{-c/\ga})$. This encodes implicitly the solution $p=\pi_1\ones=\pi_2 \ones$ of~\eqref{eq-prox-step} using the solution $\pi=(\pi_1,\pi_2)$ of~\eqref{eq-pbm-variational-generic} when introducing the functions
\begin{align*}
	\psi_1(p_1,p_2) &= \iota_{\Dd}(p_1,p_2) + h(p_1) 
	\qwhereq
	\Dd = \enscond{(p_1,p_2)}{p_1=p_2}, \\
	\psi_2(p_1,p_2) &= \iota_{\{q,r\}}(p_1,p_2), 
\end{align*}
and the weights $\la=(1,\tau) \in \RR_+^2$. The following proposition details how to compute the proximal operator of these functionals. It is important to remind that that these functionals as well as their respective proximal operators operate on vectors of $\RR^N$, not on couplings.

\begin{proposition}
	One has
	\begin{align}\label{eq-prox-psi-attraction}
		\Prox^{\oKL_\la}_{\psi_1}(p_1,p_2) &= (p,p)
		\qwhereq
		p=  \Prox^{\oKL}_{\frac{1}{1+\tau}h}\pa{ p_1^{\frac{1}{1+\tau}} \odot p_2^{\frac{\tau}{1+\tau}} } \\
		\Prox^{\oKL_\la}_{\psi_2}(p_1,p_2) &= (q,r) 
	\end{align}
\end{proposition}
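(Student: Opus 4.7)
The computation of $\Prox^{\oKL_\la}_{\psi_2}$ is immediate since $\iota_{\{q,r\}}$ is the indicator of a singleton in $(\RR^N)^2$: the minimizer of the strictly convex objective is forced to equal the unique admissible point $(q,r)$ regardless of the $\oKL_\la$ distance to $(p_1,p_2)$. So all the work is in establishing the formula for $\Prox^{\oKL_\la}_{\psi_1}$.

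For this, the plan is to unfold the definition and eliminate the constraint. Writing out
\begin{equation*}
	\Prox^{\oKL_\la}_{\psi_1}(p_1,p_2) = \uargmin{(q_1,q_2)} \; \oKL(q_1|p_1) + \tau \, \oKL(q_2|p_2) + \iota_\Dd(q_1,q_2) + h(q_1),
\end{equation*}
and enforcing $q_1=q_2=p$ via $\iota_\Dd$, the subproblem reduces to an optimization in the single variable $p$:
\begin{equation*}
	\umin{p \in \RR_+^N} \; \oKL(p|p_1) + \tau \, \oKL(p|p_2) + h(p).
\end{equation*}
The output is then $(p,p)$, which matches the proposed form.

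The key algebraic step is the weighted-geometric-mean identity for KL divergences: I claim that, up to an additive constant independent of $p$,
\begin{equation*}
	\oKL(p|p_1) + \tau \, \oKL(p|p_2) = (1+\tau) \, \oKL\!\pa{p \,\big|\, \tilde p} + \text{const},
	\qquad \tilde p \eqdef p_1^{\frac{1}{1+\tau}} \odot p_2^{\frac{\tau}{1+\tau}}.
\end{equation*}
This is checked by expanding both sides using the definition~\eqref{eq-defn-kl} and matching the entropy terms: the coefficients on $p_i \log p_i$ agree automatically (both equal $1+\tau$), and the coefficients on $p_i$ match through the choice $\log \tilde p_i = \tfrac{1}{1+\tau}\log p_{1,i} + \tfrac{\tau}{1+\tau}\log p_{2,i}$; the remaining terms depend only on $p_1,p_2$.

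Plugging this identity into the reduced subproblem, the optimization becomes
\begin{equation*}
	\uargmin{p} \; (1+\tau)\,\oKL(p|\tilde p) + h(p) = \uargmin{p} \; \oKL(p|\tilde p) + \tfrac{1}{1+\tau} h(p),
\end{equation*}
which is, by the definition~\eqref{eq-def-kl-prox-operator}, exactly $\Prox^{\oKL}_{\frac{1}{1+\tau}h}(\tilde p)$. This gives~\eqref{eq-prox-psi-attraction}. The only step requiring care is ensuring the convention $0\log 0 = 0$ is handled when some entries of $p_1$ or $p_2$ vanish, but this is standard and does not affect the minimization, since $\oKL(\cdot|\tilde p) = +\infty$ on the support where $\tilde p$ vanishes anyway.
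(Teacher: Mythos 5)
Your proof is correct and is mathematically the same as the paper's: the paper simply cites the singleton rule~\eqref{eq-prox-calculus-indic} for $\psi_2$ and the equality-constraint rule~\eqref{eq-prox-calculus-equal} (with $\la=(1,\tau)$, $M=2$) for $\psi_1$, and your weighted-geometric-mean identity is exactly the computation used in the appendix to prove~\eqref{eq-prox-calculus-equal}, correctly noted up to an additive constant. No gaps.
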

\begin{proof}
	The computation of $\Prox^{\oKL_\la}_{\psi_1}$ follows from~\eqref{eq-prox-calculus-equal}.
	The computation of $\Prox^{\oKL_\la}_{\psi_2}$ follows from~\eqref{eq-prox-calculus-indic}.
\end{proof}

With these proximal maps at hands, and with the formula for the iterations detailed in Proposition~\ref{prop-implementation-generic}, one can solve for each JKO step by computing the optimal $(\pi_1,\pi_2)$ using $q \eqdef p_t$ and then updating $p_{t+1} \eqdef \pi_1 \ones = \pi_2 \ones$.

\paragraph{Numerical Illustrations}

In order to introduce some congestion, we consider here the function $h(p)=\iota_{[0,\kappa]^N}$, as in~\eqref{eq-dfn-congestion}, and its KL proximal operator is computed as detailed in~\eqref{eq-dfn-congestion-prox}.

Figure~\ref{fig-wass-attrac} shows some examples of such a JKO flows computed on a rectangular grid of $N = 100 \times 100$ points. The right hand side column shows the target distribution $r$. Note that the flow $p_t$ typically does not converge toward $r$ as $t \rightarrow +\infty$, because of the congestion effect.

\newcommand{\myfigW}[2]{\includegraphics[width=.16\linewidth]{wasserstein-attraction/#1/#1-#2}}

\begin{figure}[h!]
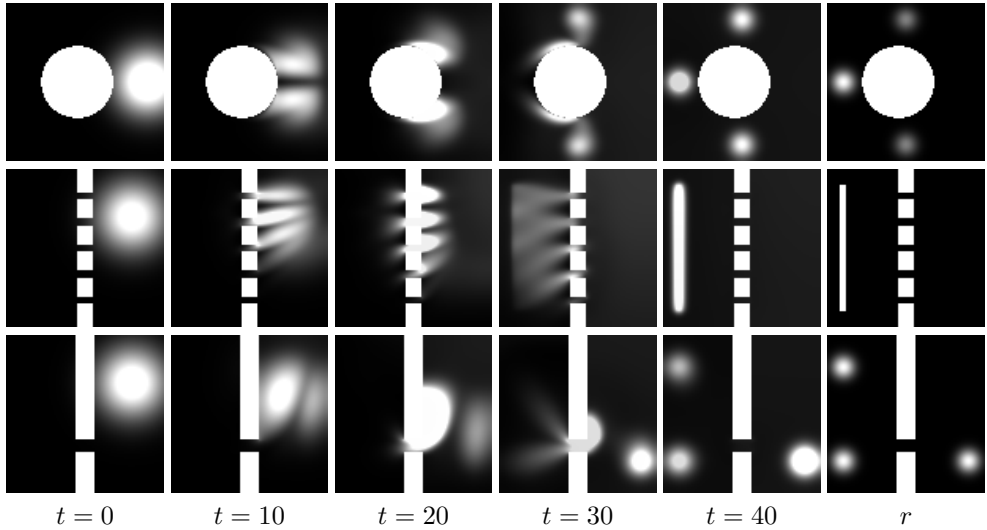

	\centering
	\begin{tabular}{@{}c@{\hspace{1mm}}c@{\hspace{1mm}}c@{\hspace{1mm}}c@{\hspace{1mm}}c@{\hspace{1mm}}c@{}}
		\myfigW{disk}{1}&
		\myfigW{disk}{2}&
		\myfigW{disk}{5}&
		\myfigW{disk}{10}&
		\myfigW{disk}{20}&
		\myfigW{disk}{tgt}\\
		\myfigW{holes}{1}&
		\myfigW{holes}{2}&
		\myfigW{holes}{5}&
		\myfigW{holes}{10}&
		\myfigW{holes}{20}&
		\myfigW{holes}{tgt}\\
		\myfigW{tworooms}{1}&
		\myfigW{tworooms}{2}&
		\myfigW{tworooms}{5}&
		\myfigW{tworooms}{10}&
		\myfigW{tworooms}{20}&
		\myfigW{tworooms}{tgt}\\
		$t=0$ & $t=10$ & $t=20$ & $t=30$ & $t=40$  & $r$ 
	\end{tabular}
	\caption{%
		Examples of gradient flows for the Wasserstein attraction toward the density $r$ displayed on the right column. The congestion parameter is set to $\kappa=\normi{p_{t=0}}$.
	}
   \label{fig-wass-attrac}
\end{figure}

\subsection{Multiple Densities Evolutions}
\label{sec-multiple-densities}

A natural extension of the JKO flow~\eqref{eq-smooth-jko} is to describe the evolution of a finite family of densities $p_t=(p_{i,t})_i$ by minimizing a function $f( (p_i)_{i} )$, where one defines the transport distance as the sum of independent Wasserstein distances
\eq{
	W_\ga( (p_i)_i, (q_i)_i ) \eqdef \sum_{i} W_\ga(p_i,q_i).
}
The function $f$ thus introduce a coupling between densities during the evolution.
For simplicity we consider in the following the case of $2$ densities. 

\paragraph{Wasserstein pairwise attraction}

We first consider the case where the coupling is a Wasserstein attraction between the two densities
\eq{
	f(p_1,p_2) = \al W(p_1,p_2) + h_1(p_1) + h_1(p_2)
}
where the functions $h_i$ are ``simple'' so that one can compute easily $\Prox_{h_i}^{\oKL}$.

Denoting $q=(q_1,q_2) \eqdef (p_{1,t},p_{2,t})$ the previous iterate at time $t$, the solution $p=p_{t+1}$ to the JKO step~\eqref{eq-prox-step} can be written as 
\eq{
	p = (p_1,p_2) = (\pi_1 \ones, \pi_2\ones) = (\pi_3^T \ones,\pi_3^T\ones), 
}
where one needs to solve for $M=3$ couplings $(\pi_1,\pi_2,\pi_3)$ the problem~\eqref{eq-pbm-variational-generic} with the functionals
\begin{align*}
	\psi_1(a_1,a_2,a_3) &= 
		\iota_{\Dd}(a_1,a_3) + 
		\iota_{\{q_2\}}(a_2) + 
		\frac{\tau}{\ga} h_1(a_1), \\
	\psi_2(a_1,a_2,a_3) &= 
		\iota_{\Dd}(a_2,a_3) + 
		\iota_{\{q_1\}}(a_1) + 
		\frac{\tau}{\ga} h_2(a_2)
\end{align*}
with KL weights $\la=(1,1,\tau\al)$. The proximal operators of these functions are easy to compute as detailed in the following proposition.

\begin{prop}
	For $i \in \{1,2\}$, denoting $j=3-i \in \{1,2\}$, one has
	\eq{
		(b_m)_m = \Prox_{\psi_i}^{\oKL_\la}(a_m)_m
		\qwhereq
		b_i=b_3=\Prox_{h_i}( a_i^{\frac{1}{1+\tau\al}} \odot a_3^{\frac{\tau}{1+\tau\al}} ), \quad
		b_j = q_j.
	}
\end{prop}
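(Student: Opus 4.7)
The plan is to unfold the definition of $\Prox_{\psi_i}^{\oKL_\la}$ as a minimization over the triple $(b_1,b_2,b_3)$, use the two indicator terms in $\psi_i$ to eliminate two of the three variables, and then invoke the standard KL calculus identity (the same one used in the $M=2$ case to derive~\eqref{eq-prox-psi-attraction}) to reduce the remaining two weighted KL terms to one.

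More precisely, fix $i\in\{1,2\}$ and let $j=3-i$. By definition,
\eq{
	\Prox_{\psi_i}^{\oKL_\la}(a_1,a_2,a_3) = \uargmin{(b_1,b_2,b_3)}
	\oKL(b_1|a_1) + \oKL(b_2|a_2) + \tau\al\, \oKL(b_3|a_3) + \psi_i(b_1,b_2,b_3).
}
The term $\iota_{\{q_j\}}(b_j)$ is finite only when $b_j = q_j$, so this coordinate is pinned. The term $\iota_\Dd(b_i,b_3)$ forces $b_i = b_3$; calling this common value $b$, the problem reduces to
\eq{
	\umin{b} \oKL(b|a_i) + \tau\al\,\oKL(b|a_3) + \tfrac{\tau}{\ga} h_i(b).
}

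The key step is the weighted-KL combination identity: for positive weights $\la_1,\la_2$, the map $b \mapsto \la_1 \oKL(b|x_1) + \la_2 \oKL(b|x_2)$ coincides, up to a constant independent of $b$, with $(\la_1+\la_2)\oKL(b|r)$ where $r = x_1^{\la_1/(\la_1+\la_2)}\odot x_2^{\la_2/(\la_1+\la_2)}$ is the geometric mean. This is a direct expansion using $\oKL(b|x)=\sum b_k\log(b_k/x_k) - b_k + x_k$ and is precisely the calculus fact invoked in the proof of~\eqref{eq-prox-psi-attraction}. Applied with $\la_1=1$, $\la_2=\tau\al$, $x_1=a_i$, $x_2=a_3$, it turns the two KL terms into $(1+\tau\al)\oKL(b|r)$ with $r = a_i^{1/(1+\tau\al)}\odot a_3^{\tau\al/(1+\tau\al)}$.

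Dividing through by $1+\tau\al$ (which does not alter the argmin) and recognizing the definition of the $\oKL$-proximal map~\eqref{eq-def-kl-prox-operator}, we get
\eq{
	b = \Prox^{\oKL}_{\frac{\tau}{(1+\tau\al)\ga} h_i}(r),
}
which then gives $b_i = b_3 = b$ and $b_j = q_j$ as announced (up to matching the prefactor on $h_i$ with the convention used in the statement). No step is really an obstacle; the only delicate point is applying the geometric-mean identity carefully and keeping track of the scaling factor on $h_i$ that comes from the $(1+\tau\al)$ normalization, exactly as in the $M=2$ case of~\eqref{eq-prox-psi-attraction}.
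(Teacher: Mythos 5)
Your proof is correct and follows essentially the same route as the paper, which simply invokes the singleton-constraint rule~\eqref{eq-prox-calculus-indic} to get $b_j=q_j$ and the equality-constraint rule~\eqref{eq-prox-calculus-equal} to get $(b_i,b_3)$ --- the proof of the latter being exactly the weighted geometric-mean identity you expand inline. Note that your (correct) computation yields the exponent $\tau\al/(1+\tau\al)$ on $a_3$ and the scaling $\frac{\tau}{(1+\tau\al)\ga}$ on $h_i$, which indicates that the exponent $\frac{\tau}{1+\tau\al}$ and the bare $\Prox_{h_i}$ appearing in the stated proposition are typos rather than defects of your argument.
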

\begin{proof}
	The expression for $(b_i,b_3)$ is obtained by using~\eqref{eq-prox-calculus-equal}.
	The expression for $b_j$ is obtained by using~\eqref{eq-prox-calculus-indic}.
\end{proof}

In the numerical example, we used $h_i(p_i) \eqdef \iota_{[0,\kappa]^N}(p_i)+\dotp{w_i}{p_i}$ for potentials $(w_1,w_2)  \in \RR^N \times \RR^N$. The $\KL$ proximal operator of these functions can be computed as detailed in Proposition~\ref{prop-prox-congest}. Figure~\ref{fig-pairwise-attraction} displays the results obtained on a rectangular grid of $N=200 \times 200$ points.

\newcommand{\myfigPairA}[3]{\includegraphics[width=.19\linewidth]{pairwise-attraction/#1-#2/#1-#3}}

\begin{figure}[h!]
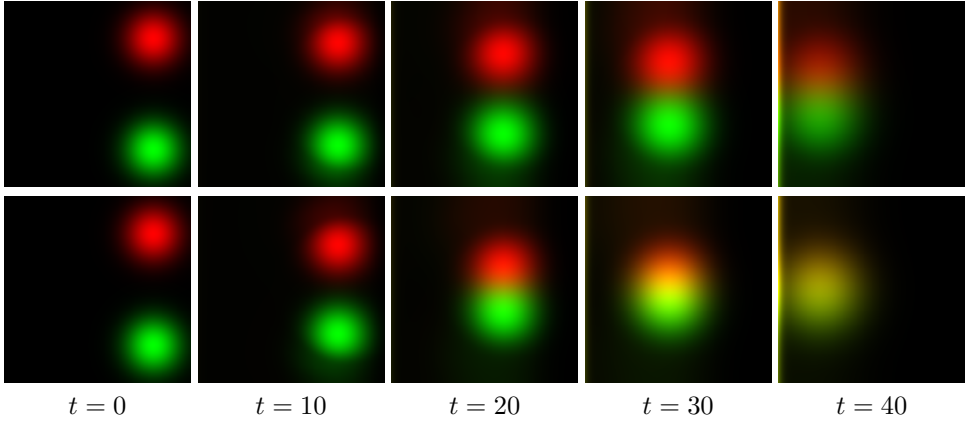

	\centering
	\begin{tabular}{@{}c@{\hspace{1mm}}c@{\hspace{1mm}}c@{\hspace{1mm}}c@{\hspace{1mm}}c@{}}
		\myfigPairA{twobumps}{10}{1}&
		\myfigPairA{twobumps}{10}{2}&
		\myfigPairA{twobumps}{10}{5}&
		\myfigPairA{twobumps}{10}{8}&
		\myfigPairA{twobumps}{10}{12}\\
		\myfigPairA{twobumps}{30}{1}&
		\myfigPairA{twobumps}{30}{2}&
		\myfigPairA{twobumps}{30}{5}&
		\myfigPairA{twobumps}{30}{8}&
		\myfigPairA{twobumps}{30}{12}\\
		$t=0$ & $t=10$ & $t=20$ & $t=30$ & $t=40$ 
	\end{tabular}
	\caption{%
		Evolution with a pairwise attraction between two densities, with congestion parameter $\kappa=\normi{p_{1,t=0}}=\normi{p_{2,t=0}}$.
		Display of both $p_{1,t}$ (red) and $p_{2,t}$ (green), yellow indicates a mixing.
		Top row: $\al=1$. Bottom row: $\al=3$.
	}
   \label{fig-pairwise-attraction}
\end{figure}

\paragraph{Summation couplings}

Another way to introduce some interaction between $p_1$ and $p_2$ is to consider a coupling on the sum
\eq{
	f(p_1,p_2) \eqdef h(p_1+p_2) + \dotp{p_1}{w_1} + \dotp{p_2}{w_2}.
}
for some function $h$ for which one can compute easily $\Prox_{h}^{\KL}$.

In this case, the solution $p=p_{t+1}$ to the JKO step~\eqref{eq-prox-step} can be written as $p=(p_1,p_2) = (\pi_1 \ones, \pi_2\ones)$ where the $M=2$ couplings $(\pi_1,\pi_2)$ solves problem~\eqref{eq-pbm-variational-generic} with the functionals
\begin{align*}
	\psi_1(a_1,a_2) &= 
		\frac{\tau}{\ga} f(a_1,a_2), \\
	\psi_2(a_1,a_2) &= 
		\iota_{\{q_1,q_2\}}(a_1,a_2)
\end{align*}
and weights $\la=(1,1)$.
The proximal operators of the functions are easy to compute as detailed in the following proposition.

\begin{prop}
	One has
	\begin{align*}
		\Prox_{\psi_1}^{\oKL_\la}(a_1,a_2) &= 
			\frac{ \Prox_{\frac{\tau}{\ga} h}(\tilde a_1+\tilde a_2) }{\tilde a_1+\tilde a_2} \odot ( \tilde a_1,\tilde a_2 ) \\
		\Prox_{\psi_2}^{\oKL_\la}(a_1,a_2) &= (q_1,q_2).
	\end{align*}
	where $\tilde a_i = a_i \odot e^{-\frac{\tau}{\ga} w_i}$
\end{prop}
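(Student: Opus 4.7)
The formula for $\Prox_{\psi_2}^{\oKL_\la}$ is immediate: since $\psi_2 = \iota_{\{(q_1,q_2)\}}$ is the indicator of a single point, its proximal map (with respect to any Bregman divergence on the product space) is the constant map returning $(q_1,q_2)$. This is exactly formula~\eqref{eq-prox-calculus-indic} applied componentwise.

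For $\psi_1$, the plan is to proceed in two stages. \emph{Stage one}: absorb the linear terms $\dotp{p_i}{w_i}$ into the reference measures. By the shift rule~\eqref{eq-prox-calculus-shift}, adding $\frac{\tau}{\ga}\dotp{a_i}{w_i}$ inside the KL proximal is equivalent to replacing the reference $a_i$ by $\tilde a_i \eqdef a_i \odot e^{-\frac{\tau}{\ga} w_i}$. This reduces the problem to computing
\eq{
\uargmin{(b_1,b_2) \in (\RR_+^N)^2}
\oKL(b_1|\tilde a_1) + \oKL(b_2|\tilde a_2) + \frac{\tau}{\ga} h(b_1+b_2).
}

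\emph{Stage two}: optimize first over the splitting of the sum $s \eqdef b_1+b_2$ and then over $s$. For a fixed target sum $s$, the first two terms decouple pointwise. Writing out the first-order conditions at index $i$ with a Lagrange multiplier enforcing $b_{1,i}+b_{2,i}=s_i$ gives $\log(b_{1,i}/\tilde a_{1,i}) = \log(b_{2,i}/\tilde a_{2,i})$, so that $(b_1,b_2)$ must be proportional (componentwise) to $(\tilde a_1,\tilde a_2)$, namely
\eq{
(b_1,b_2) = \frac{s}{\tilde a_1+\tilde a_2}\odot(\tilde a_1,\tilde a_2).
}
Plugging this back in and using $b_{1,i}\log\lambda_i + b_{2,i}\log\lambda_i = s_i \log(s_i/(\tilde a_{1,i}+\tilde a_{2,i}))$, the sum $\oKL(b_1|\tilde a_1)+\oKL(b_2|\tilde a_2)$ collapses exactly to $\oKL(s|\tilde a_1+\tilde a_2)$. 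The outer problem in $s$ is then
\eq{
\umin{s} \oKL(s|\tilde a_1+\tilde a_2) + \frac{\tau}{\ga} h(s),
}
whose unique minimizer is by definition $s^\star = \Prox_{\frac{\tau}{\ga}h}^{\oKL}(\tilde a_1+\tilde a_2)$. Combining this with the splitting formula yields the claimed expression.

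The only non-routine step is the collapse $\oKL(b_1|\tilde a_1)+\oKL(b_2|\tilde a_2) = \oKL(s|\tilde a_1+\tilde a_2)$ at the optimal split; this is essentially the equality case of the log-sum inequality and can be verified either by the Lagrange-multiplier calculation above or by quoting a precomputed calculus rule analogous to~\eqref{eq-prox-calculus-equal}. Strict convexity of $\oKL(\cdot|\tilde a_i)$ guarantees uniqueness throughout, so no selection issues arise.
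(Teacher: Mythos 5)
Your proof is correct and follows essentially the same route as the paper: the shift rule \eqref{eq-prox-calculus-shift} to absorb the linear terms into the reference measures, followed by the composition-with-sum rule \eqref{eq-prox-calculus-sum}, which your stage-two Lagrange-multiplier computation re-derives exactly as in the paper's appendix proof of that rule. The only difference is that the paper merely cites the calculus rules (and its citation of \eqref{eq-prox-calculus-lifting} there appears to be a typo for \eqref{eq-prox-calculus-sum}), whereas you re-prove the sum rule inline.
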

\begin{proof}
	The expression for $\Prox_{\psi_1}^{\oKL_\la}$ is obtained by combining~\eqref{eq-prox-calculus-shift} and~\eqref{eq-prox-calculus-lifting}.
	The expression for $\Prox_{\psi_2}^{\oKL_\la}$ is obtained by using~\eqref{eq-prox-calculus-indic}.
\end{proof}

\newcommand{\myfigPair}[2]{\includegraphics[width=.19\linewidth]{pairwise-sum/#1/#1-#2}}

\begin{figure}[h!]
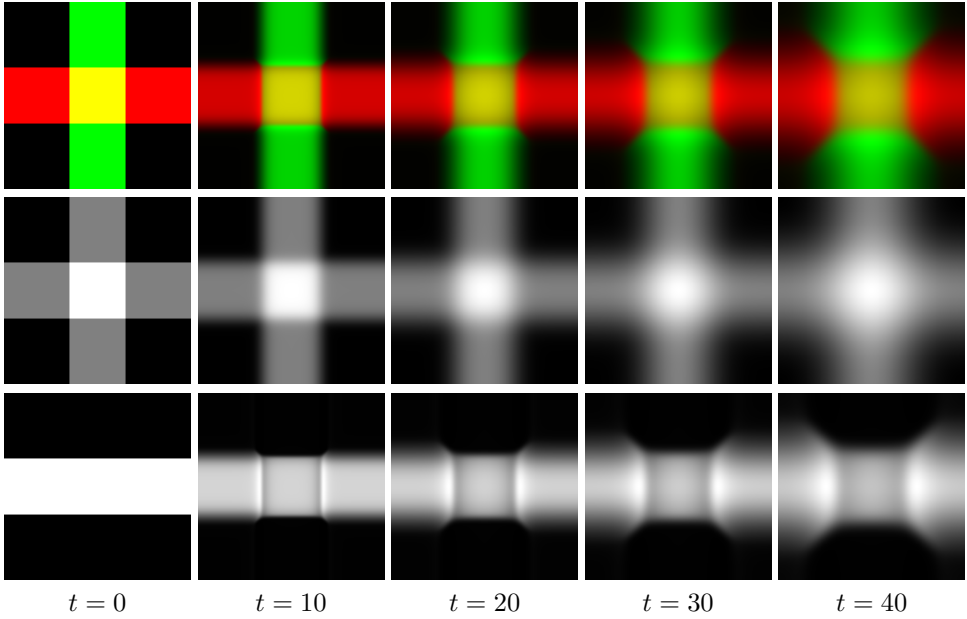

	\centering
	\begin{tabular}{@{}c@{\hspace{1mm}}c@{\hspace{1mm}}c@{\hspace{1mm}}c@{\hspace{1mm}}c@{}}
		\myfigPair{tworectangles}{1}&
		\myfigPair{tworectangles}{2}&
		\myfigPair{tworectangles}{5}&
		\myfigPair{tworectangles}{10}&
		\myfigPair{tworectangles}{20}\\
		\myfigPair{tworectangles}{sum-1}&
		\myfigPair{tworectangles}{sum-2}&
		\myfigPair{tworectangles}{sum-5}&
		\myfigPair{tworectangles}{sum-10}&
		\myfigPair{tworectangles}{sum-20}\\
		\myfigPair{tworectangles}{p1-1}&
		\myfigPair{tworectangles}{p1-2}&
		\myfigPair{tworectangles}{p1-5}&
		\myfigPair{tworectangles}{p1-10}&
		\myfigPair{tworectangles}{p1-20}\\
		$t=0$ & $t=10$ & $t=20$ & $t=30$ & $t=40$ 
	\end{tabular}
	\caption{%
		Evolution with a summation coupling $E(p_1+p_2)$.
		Top row: display of both $p_1$ (red) and $p_2$ (green), yellow indicates a mixing.
		Middle row: display of $p_1+p_2$, which evolves according to a linear heat diffusion.
		Bottom row: display of $p_1$.
	}
   \label{fig-pairwise-sum-entropy}
\end{figure}

As a first example, we consider an entropic coupling $h=E$, with $w_1=w_2=0$. Its proximal operator is computed in~\eqref{eq-prox-entropy}. A formal computation shows that, for the Euclidean $W_2$ transport on $\RR^d$, the corresponding discrete JKO steps~\eqref{eq-smooth-jko} is intended at approximating the non-linear PDE over $p_t = (p_{1,t},p_{2,t})$
\eq{
	\foralls i \in \{1,2\}, \quad
	\foralls t>0, \quad
	\partial_t p_{i,t} = \diverg\pa{ \frac{p_{i,t}}{p_{1,t} + p_{2,t}} \nabla p_{i,t} }.
}
This shows that while $p_t$ follows a non-linear coupled diffusion, $p_{1,t}+p_{2,t}$ follows a linear heat diffusion.
Figure~\ref{fig-pairwise-sum-entropy} shows a numerical illustration on a regular grid of $N=200 \times 200$ points.

As a second example, we consider a congestion coupling $h=\iota_{[0,\kappa]^N}$. Its proximal operator is computed in~\eqref{eq-dfn-congestion-prox}. Figure~\ref{fig-pairwise-sum-congestion} shows a numerical illustration  on a regular grid of $N=200 \times 200$ points. It shows two densities, initially supported on non-overlapping squares, moving in opposite directions under potentials $(w_1,w_2)$ such that $\nabla w_1=(1,0)^T$ and $\nabla w_2=(-1,0)^T$ (constant horizontal gradients). A congestion shock is created by the overlap of the densities, which in turn forces the support of the densities to be deformed and vertically enlarged.

\begin{figure}[h!]
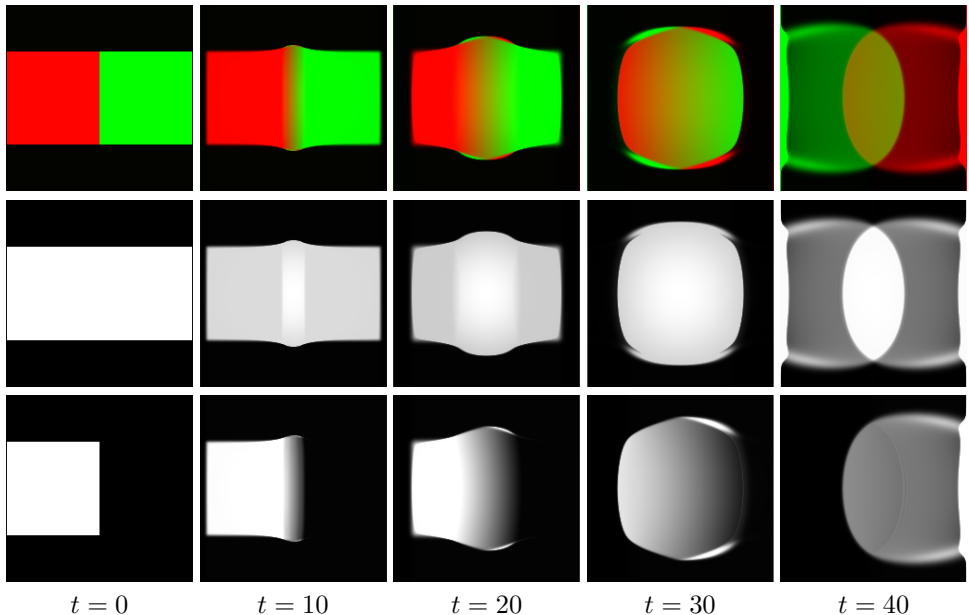

	\centering
	\begin{tabular}{@{}c@{\hspace{1mm}}c@{\hspace{1mm}}c@{\hspace{1mm}}c@{\hspace{1mm}}c@{}}
		\myfigPair{congestion}{1}&
		\myfigPair{congestion}{2}&
		\myfigPair{congestion}{5}&
		\myfigPair{congestion}{10}&
		\myfigPair{congestion}{20}\\
		\myfigPair{congestion}{sum-1}&
		\myfigPair{congestion}{sum-2}&
		\myfigPair{congestion}{sum-5}&
		\myfigPair{congestion}{sum-10}&
		\myfigPair{congestion}{sum-20}\\
		\myfigPair{congestion}{p1-1}&
		\myfigPair{congestion}{p1-2}&
		\myfigPair{congestion}{p1-5}&
		\myfigPair{congestion}{p1-10}&
		\myfigPair{congestion}{p1-20}\\
		$t=0$ & $t=10$ & $t=20$ & $t=30$ & $t=40$ 
	\end{tabular}
	\caption{%
		Evolution with a summation coupling $\iota_{[0,\kappa]^N}(p_1+p_2)$
		where $\kappa=\normi{p_{1,t=0}}=\normi{p_{2,t=0}}$.
		Top row: display of both $p_{1,t}$ (red) and $p_{2,t}$ (green), yellow indicates a mixing.
		Middle row: display of $p_{1,t}+p_{2,t}$.
		Bottom row: display of $p_{1,t}$.
	}
   \label{fig-pairwise-sum-congestion}
\end{figure}

\section*{Discussion and Conclusion}
\label{sec-conclusion}

In this paper, we have presented a novel algorithm to compute approximate discrete gradient flows according to an entropic smoothing of the Wasserstein distance. The main interest of the method is its speed, simplicity and versatility. This is achieved because the iterations only require (beside pointwise multiplications, divisions and exponentiations) to compute the successive applications of a ``convolution-like'' operator corresponding to the Gibbs kernel associated to the metric.

A natural question is to explore whether the discrete flow defined by~\eqref{eq-smooth-jko} has a continuous limit when $\tau_t=\tau \rightarrow 0$. If one uses a fixed $\ga_t=\ga>0$, this is not the case, because $W_\ga$ does not satisfies $W_\ga(p,p)=0$. More precisely, one has that 
\eq{
	\uargmin{q} W_\ga(p,q) = \xi \pa{ \frac{p}{\xi^T(p)} }, 
} 
so that the limit for small $\tau$ of $p_{t+1}$ defined by~\eqref{eq-smooth-jko} is a blurred (i.e. multiplied by $\xi$) version of $p_t$. Instead of using a fixed value for $\ga_t$, choosing $\ga_t=\ga(\tau_t)$ for some carefully chosen function of $\tau_t$ could allow the discrete flow to converge to the usual Wasserstein flow. We leave the analysis of this asymptotic setting to a future work.

\section*{Aknowledgements}

This work has been supported by the European Research Council (ERC project SIGMA-Vision). I would like to acknowledge  stimulating discussions with Marco Cuturi, Justin Solomon, Jean-David Benamou, Guillaume Carlier and Quentin Merigot.
I would like to thank Guillaume Carlier for suggesting to apply the method to multiple densities (Section~\ref{sec-multiple-densities}). I would like to thank Jean-Marie Mirebeau for giving me access to his code for anisotropic diffusion. I would like to  thank Antonin Chambolle and Jalal Fadili for suggesting me the proof strategy of Proposition~\ref{prop-conv-dykstra}. 
\appendix
\section{KL Proximal Calculus}
\label{sec-kl-calculus}

The following proposition details some useful property of the $\oKL$ proximal operator~\eqref{eq-def-kl-prox-operator}. This enables a powerful ``proximal calculus'' by combining these rules, which eases and simplifies the implementation of the algorithms. Note that we also consider generalized KL divergence over sets $(p_1,\ldots,p_M)$ of $M$ densities according to some weight $\la \in \RR_+^M$
\eql{\label{eq-defn-okl-lambda}
	\foralls (p_m)_m, (q_m)_m, \quad
	\oKL_\la( (p_m)_m | (q_m)_m ) \eqdef \sum_{m=1}^M \la_m \oKL(p_m|q_m).
}

\newcommand{\proxrule}[1]{}

\begin{proposition}
		\proxrule{Singleton constraint} For $f(p_1,\ldots,p_M) \eqdef \iota_{\{(q_1,\ldots,q_M)\}}(p_1,\ldots,p_M)$, one has
			\eql{\label{eq-prox-calculus-indic}
				\Prox_{f}^{\oKL_\la}(p_1,\ldots,p_M)=(q_1,\ldots,q_M).
			}
		\proxrule{Linear shift} For $f(p_1,\ldots,p_M) \eqdef h(p_1,\ldots,p_M)+\sum_{i=1}^M \dotp{w_i}{p_i}$, one has
			\eql{\label{eq-prox-calculus-shift}
				\Prox_{f}^{\oKL_\la}(p_1,\ldots,p_M) = 
				\Prox_{h}^{\oKL_\la}(p_1 \odot e^{-w_1/\la_1}, \ldots, p_M \odot e^{-w_M/\la_M}).
			}
		\proxrule{Equality constraint} For $f(p_1,\ldots,p_M) \eqdef \iota_\Dd(p_1,\ldots,p_M)+h(p_1,\ldots,p_M)$ where
			\eq{
				\Dd \eqdef \enscond{(p_1,\ldots,p_M)}{p_1=\ldots=p_M},
			} 
			one has
			\eql{\label{eq-prox-calculus-equal}
				\Prox_{f}^{\oKL_\la}(p_1,\ldots,p_M)= (p,\ldots,p)
				\!\qwhereq\!
				p = \Prox_{\frac{1}{\sum_i \la_i} \tilde h}^{\KL}\pa{ p_1^{\tilde\la_1} \odot \ldots \odot p_M^{\tilde\la_M} }, 	
			}
			where we denoted $\tilde\la_i \eqdef \la_i/\sum_{j} \la_j$ and $\tilde h(p)=h(p,\ldots,p)$. \\
		\proxrule{Composition with sum} For  $f(p_1,\ldots,p_M) \eqdef h(p_1+\ldots+p_M)$ and 
			$\la \eqdef (1,\ldots,1)$, one has 
			\eql{\label{eq-prox-calculus-sum}
				\Prox_{f}^{\oKL_\la}(p_1,\ldots,p_M)= \frac{\Prox_{h}^{\oKL}(p_1+\ldots+p_M)}{p_1+\ldots+p_M} (p_1,\ldots,p_M)
			}
		\proxrule{Lifting} We define $f(\pi_1,\ldots,\pi_M) \eqdef h(\pi_1\ones,\ldots,\pi_M\ones)$. 
			We denote 
			\eq{
				\foralls m \in \{1,\ldots,M\}, \quad p_m \eqdef \pi_m\ones
				\qandq
				(\tilde p_1,\ldots,\tilde p_M) \eqdef  \Prox_h^{\oKL_\la}( p_1,\ldots,p_M ).
			}
			One has
			\eql{\label{eq-prox-calculus-lifting}
				\Prox_{f}^{\KL_\la}(\pi_1,\ldots,\pi_M) =
				\pa{
					\diag\pa{ \frac{\tilde p_m}{p_m} } \pi
				}_m
			}
		\proxrule{Lifting, transposed} We define $f(\pi_1,\ldots,\pi_M) \eqdef h(\pi_1^T\ones,\ldots,\pi_M^T\ones)$. 
			We denote 
			\eq{
				\foralls m \in \{1,\ldots,M\}, \quad p_m \eqdef \pi_m^T \ones
				\qandq
				(\tilde p_1,\ldots,\tilde p_M) \eqdef  \Prox_h^{\oKL_\la}( p_1,\ldots,p_M ).
			}
			One has
			\eql{\label{eq-prox-calculus-lifting-transp}
				\Prox_{f}^{\KL_\la}(\pi_1,\ldots,\pi_M) =
				\pa{
					\pi \diag\pa{ \frac{\tilde p_m}{p_m} }
				}_m
			}
\end{proposition}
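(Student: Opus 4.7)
The plan is to establish each rule by writing the first-order optimality conditions of the strictly convex minimization defining $\Prox^{\oKL_\la}$ and by exploiting two basic algebraic identities for $\oKL$: (i) a linear term $\dotp{w}{p}$ can be absorbed into the reference measure by multiplying it with $e^{-w/\la}$; and (ii) a row- or column-rescaling of a coupling contributes purely a vector $\oKL$ term. Each rule reduces to one of these two mechanisms.

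Rule~\eqref{eq-prox-calculus-indic} is immediate since the indicator restricts the feasible set to a single point. For the shift rule~\eqref{eq-prox-calculus-shift}, a direct expansion of $\la_m \oKL(p_m \,|\, q_m \odot e^{-w_m/\la_m})$ gives $\la_m \oKL(p_m|q_m) + \dotp{w_m}{p_m}$ up to a constant depending only on $(q_m, w_m)$; so the objectives differ by a constant and the argmins coincide.

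The equality rule~\eqref{eq-prox-calculus-equal} follows by substituting $p_1=\ldots=p_M=p$ and regrouping the terms of $\sum_m \la_m \oKL(p|q_m)$. Setting $\Lambda \eqdef \sum_m \la_m$ and using $\sum_m \la_m \log q_{m,i} = \Lambda \log\bigl(\prod_m q_{m,i}^{\tilde\la_m}\bigr)$, the sum collapses (up to a constant in $q$) to $\Lambda\,\oKL\bigl(p \,\big|\, q_1^{\tilde\la_1} \odot \cdots \odot q_M^{\tilde\la_M}\bigr)$; dividing by $\Lambda$ puts the reduced problem in exactly the form of a single $\oKL$-proximal map for $\tilde h/\Lambda$ with $\tilde h(p)=h(p,\ldots,p)$. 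The sum rule~\eqref{eq-prox-calculus-sum} uses a two-stage optimization: for a fixed total $s=\sum_m p_m$, the inner problem $\min \sum_m \oKL(p_m|q_m)$ has optimality condition $\log(p_{m,i}/q_{m,i}) = \lambda_i$ independent of $m$, hence $p_m = r \odot q_m$ for some vector $r$, and enforcing the marginal constraint yields $r = s/\sum_m q_m$. Substituting back, a direct computation shows that the inner optimum equals $\oKL(s\,|\,\sum_m q_m)$, so the outer problem over $s$ is precisely $\Prox_h^{\oKL}(\sum_m q_m)$, and the splitting $p_m = (s/\sum_m q_m)\odot q_m$ gives the advertised formula.

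Finally, the lifting rules~\eqref{eq-prox-calculus-lifting} and~\eqref{eq-prox-calculus-lifting-transp} apply the same two-stage trick at the level of couplings. For each $m$, fix the desired row marginal $\tilde p_m$ and minimize $\KL(\tilde\pi_m|\pi_m)$ subject to $\tilde\pi_m\ones=\tilde p_m$: the Lagrangian first-order condition forces $\tilde\pi_m$ to be a row rescaling of $\pi_m$, so $\tilde\pi_m=\diag(\tilde p_m/p_m)\pi_m$ with $p_m \eqdef \pi_m\ones$, and a short computation—summing $j$ first inside $\KL$—shows that the resulting value equals $\oKL(\tilde p_m|p_m)$. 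The outer problem in $(\tilde p_m)_m$ is then precisely $\Prox_h^{\oKL_\la}(p_1,\ldots,p_M)$, which yields~\eqref{eq-prox-calculus-lifting}; the transposed statement~\eqref{eq-prox-calculus-lifting-transp} follows by exchanging the roles of rows and columns. The only mildly delicate point in the whole argument is the bookkeeping of the additive constants coming from the $-p_i + q_i$ terms in $\oKL$ and from the rescalings in rules~\eqref{eq-prox-calculus-shift} and~\eqref{eq-prox-calculus-equal}; one must verify that these constants depend only on the input data and therefore drop out of the argmin. Beyond this, every step is a direct application of $\log$-linearity and the change-of-variable formula for $\oKL$.
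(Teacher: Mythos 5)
Your proposal is correct, and for the singleton, shift, and equality rules it follows essentially the same path as the paper: reduce each rule to an algebraic identity for $\oKL$ (absorbing a linear term into the reference point, collapsing $\sum_m \la_m \oKL(\cdot|p_m)$ into a single divergence against the geometric mean), with the additive constants correctly identified as depending only on the data and hence irrelevant to the argmin. Where you genuinely diverge from the paper is in the sum and lifting rules. The paper proves these by writing the first-order optimality conditions of the full problem directly — e.g.\ for the lifting rule it exhibits $(z_m)_m$ in the subdifferential of $h$ at the marginals, reads off $\tilde\pi_m = \diag(e^{-z_m/\la_m})\pi_m$, and recognizes the induced condition on $\tilde p_m = \tilde\pi_m\ones$ as the optimality condition of $\Prox_h^{\oKL_\la}$. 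You instead use a two-stage (partial-minimization) argument built on the marginalization identity
\begin{equation*}
\min\bigl\{\,\KL(\tilde\pi\,|\,\pi) \;:\; \tilde\pi\ones = \tilde p\,\bigr\} = \oKL(\tilde p\,|\,\pi\ones),
\end{equation*}
and its analogue $\min\{\sum_m\oKL(p_m|q_m) : \sum_m p_m = s\} = \oKL(s\,|\,\sum_m q_m)$ for the sum rule, which reduces the outer problem to a single vector-level proximal map. Both routes are valid; your version has the advantage of isolating a reusable lemma (the KL divergence is "marginalized" by partial minimization, which also explains \emph{why} the optimal coupling is a diagonal rescaling), at the cost of having to justify that the inner minimum is attained and that partial minimization of the jointly convex objective is legitimate — points you should state explicitly, though they are standard. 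The paper's subdifferential computation is more direct but less transparent about where the diagonal-scaling structure comes from.
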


\newcommand{\proxruleP}[1]{\noindent\textbf{Proof of \eqref{eq-prox-calculus-#1}. }~}
\begin{proof}
		\proxruleP{indic} This is straightforward.  \\
		\proxruleP{shift}  If follows from the relation
		\begin{align*}
			&\oKL_\la( (q_1,\ldots,q_M)|(p_1,\ldots,p_M) ) + \sum_{i=1}^M \dotp{w_i}{q_i} \\ =  
			&\oKL_\la( (q_1,\ldots,q_M)| (p_1 \odot e^{-w_1/\la_1}, \ldots, p_M \odot e^{-w_M/\la_M}) ).
		\end{align*}
		\proxruleP{equal}  
		We denote $(q_m)_m \eqdef \Prox^{\oKL_\la}_{\psi_1}( (p_m)_m )$, so that $q=q_1=\ldots=q_M$ solves
	\eq{
		\umin{q} \sum_m \la_m\oKL(q|p_m) + \tilde h(q).
	}
	The result follow from the relation
	\eq{
		\sum_m \la_m \oKL(q|p_m) = 
		\textstyle \pa{\sum_m \la_m} \oKL\pa{q| p_1^{\tilde\la_1} \odot \ldots \odot  p_M^{\tilde\la_M} 
		}.
	}
		\proxruleP{sum}  Denoting $(q_m)_m = \Prox_{f}^{\oKL_\la}( (p_m)_m )$, the first order optimality condition for $\Prox_{f}^{\oKL_\la}$ reads 
	\eq{
		\foralls m \in \{1,\ldots,M\}, \quad
			\log\pa{\frac{q_m}{p_m}} + u = 0 
	}
	where $u \in \partial h(p_1+\ldots+p_M)$. respectively summing and subtracting these equations lead to
	\eq{
		q_1+\ldots+q_M = \Prox_{h}(p_1+\ldots+p_M)
		\qandq
		\frac{q_1}{p_1} = \ldots = \frac{q_m}{p_m}.
	}
	Solving for $(q_1,\ldots,q_m)$ in these equations leads to the desired solution. \\
	\proxruleP{lifting} The first order condition for $\tilde\pi$ being a solution of~\eqref{eq-defn-proxKL} states the existence of $(z_m)_m \in \partial f(\tilde p_1,\ldots,\tilde p_M)$ where $\tilde p_m = \tilde\pi_m \ones$ such that 
	\eq{
		\la_m \log\pa{ \frac{\tilde\pi_m}{\pi_m} } + z_m \ones^T = 0
		\;\Rightarrow\;
		\tilde\pi_m = \diag(e^{-z_m/\la_m }) \pi_m
		\;\Rightarrow\;
		\tilde p_m = \diag(e^{-z_m/\la_m }) p_m, 
	}	 
	which corresponds to the first order condition for $(\tilde p_m)_m$ being a solution of~\eqref{eq-def-kl-prox-operator} for the function $h$, i.e.
	\eq{
		(\tilde p_m)_m =  \Prox_{h}^{\oKL}( (p_m)_m ).
	} 
	Finally, one obtains
	\eq{
		\tilde\pi_m = \diag(e^{-z_m/\la_m}) \pi_m = \diag\pa{ \frac{\tilde p_m}{p_m} } \pi_m
	}	
	and hence the desired result. 	\\		
		\proxruleP{lifting-transp} It is obtained by transposing formula~\eqref{eq-prox-calculus-lifting}. 
\end{proof}

\bibliographystyle{plain}
\bibliography{bibliography}

\end{document}